\numberwithin{equation}{section}
\theoremstyle{plain}
\begin{document}
\newtheorem*{ThA}{Theorem A}
\newtheorem*{ThB}{Theorem B}
\newtheorem*{ThC}{Theorem C}
\newtheorem*{ThD}{Theorem D}
\newtheorem*{definition}{Definition}
\newtheorem{theorem}{Theorem}[section]
\newtheorem{conjecture}[theorem]{Conjecture}
\newtheorem{corollary}[theorem]{Corollary}
\newtheorem{lemma}[theorem]{Lemma}
\newtheorem{proposition}[theorem]{Proposition}
\newtheorem{remark}[theorem]{Remark}
\newtheorem{example}[theorem]{Example}
\newtheorem{open}[theorem]{Open Question}


   \newtheorem{thm}{Theorem}[subsection]
   \newtheorem{cor}[thm]{Corollary}
   \newtheorem{lem}[thm]{Lemma}
   \newtheorem{prop}[thm]{Proposition}
   \newtheorem{defi}[thm]{Definition}
   \newtheorem{rem}[thm]{Remark}
   \newtheorem{exa}[thm]{Example}

\title[Hardy-Hilbert type inequalities]{Hardy-Hilbert type inequalities on homogeneous groups-An introduction and generalization to the kernel case}
\author{M. F. Yimer, L.-E. Persson$^\ast$, M. Ruzhansky, N. Samko and T. G. Ayele}

\address{Department of Mathematics\\ Addis Ababa University, 1176 Addis Ababa, Ethiopia {\em E-mail: markos.fisseha@aau.edu.et} }
\address{Department of Mathematics and Computer Science\\Karlstad University\\ SE-651 88 Karlstad, Sweden\\ and\\ Department of Computer Science and Computational Engineering\\ UiT The Arctic University of Norway, N-8505, Narvik, Norway\\ {\em Email: larserik6pers@gmail.com}}
\address{Department of Mathematics: Analysis, Logic and Discrete Mathematics\\
 Ghent University, Belgium\\ and\\ School of Mathematical Sciences\\Queen Mary University of London\\ United Kingdom\\ {\em E-mail: michael.ruzhansky@ugent.be} }
\address{Department of Computer Science and Computational Engineering\\UiT The Arctic University of Norway, N-8505, Narvik, Norway\\
  {\em Email: nsamko@gmail.com}}
\address{Department of Mathematics\\ Addis Ababa University, 1176 Addis Ababa, Ethiopia {\em E-mail: tsegaye.ayele@aau.edu.et}}

\subjclass[2010]{26D10, 26D20, 12E30}
\keywords{Inequalities; Hardy's inequality; Hardy-Hilberts inequality; hyperbolic spaces; homogeneous groups; Cartan-Hadamard manifolds}

\begin{abstract} 
There is a lot of information available concerning Hardy-Hilbert type inequalities in one or more dimensions. In this paper we introduce the development of such inequalities on homogeneous groups. Moreover, we point out a unification of several of the Hardy-Hilbert type inequalities in the classical case to a general kernel case. Finally, we generalize these results to the homogeneous group case.
\end{abstract}
\maketitle
\allowdisplaybreaks
\section{Introduction}\label{Sec.1}
First, let us recall the well-known classical Hilbert's integral inequality.\\

If $f$ and $g$ are measurable real-valued functions such that $f,\, g\in L^2(0,\infty)$, then the following inequality
\begin{align}\label{F.V.H.H.Eq.1}
\int_0^\infty\int_0^\infty \dfrac{f(x)g(y)}{x+y}\, \mathrm{d}x\mathrm{d}y\leq \pi\left(\int_0^\infty f^2(x) \mathrm{d}x\right)^{1/2}\left(\int_0^\infty g^{2}(y) \mathrm{d}y\right)^{1/2},
\end{align} 
holds, where the constant factor $\pi$ in \eqref{F.V.H.H.Eq.1} is the best possible (see \cite{S}). 

Hilbert worked mostly with the discrete version of \eqref{F.V.H.H.Eq.1}, see his 1906 paper \cite{HI}. The proof of the sharp constant $\pi$ as well as the integral version \eqref{F.V.H.H.Eq.1} was derived by Schur in 1911 (see \cite{WM}). Later, in 1925, Hardy proved an extension of inequality \eqref{F.V.H.H.Eq.1} as follows (see \cite{H1}): 
\begin{ThA}\label{ThmA}
If $p>1$, $q=p/(p-1)$, $f(x)\geq 0$ and $g(x)\geq 0$ are such that $f\in L^p(0,\infty)$ and $g\in L^q(0,\infty)$, then the inequality
\begin{align}\label{H.H.Eq.2}
\int_0^\infty\int_0^\infty \dfrac{f(x)g(y)}{x+y}\, \mathrm{d}x\mathrm{d}y\leq \dfrac{\pi}{\sin \left(\frac{\pi}{p}\right)}\left(\int_0^\infty f^p(x) \mathrm{d}x\right)^{1/p}\left(\int_0^\infty g^{q}(y) \mathrm{d}y\right)^{1/q},
\end{align} 
holds. Moreover, the constant factor $\dfrac{\pi}{\sin\left(\pi/p\right)}$ in \eqref{H.H.Eq.2} is the best possible (see also \cite{HLP}, \cite{KPS}, \cite{P1} and the references therein).
\end{ThA}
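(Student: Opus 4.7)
The plan is to establish \eqref{H.H.Eq.2} by the standard weighted-H\"older device, then verify the sharp constant by a suitable family of near-extremizing test functions.

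For the inequality itself, I would first split the kernel using the symmetric weight $(x/y)^{1/(pq)}$, writing
\[
\frac{f(x)g(y)}{x+y}=\left[\frac{f(x)}{(x+y)^{1/p}}\left(\frac{x}{y}\right)^{1/(pq)}\right]\left[\frac{g(y)}{(x+y)^{1/q}}\left(\frac{y}{x}\right)^{1/(pq)}\right],
\]
so that $1/p+1/q=1$ makes the kernel split reassemble correctly and the weight factors cancel. Applying H\"older's inequality with conjugate exponents $p$ and $q$ to the double integral then yields the bound
\[
\left(\int_0^\infty\!\!\int_0^\infty\frac{f^p(x)}{x+y}\left(\frac{x}{y}\right)^{1/q}\mathrm{d}x\,\mathrm{d}y\right)^{\!1/p}\!\left(\int_0^\infty\!\!\int_0^\infty\frac{g^q(y)}{x+y}\left(\frac{y}{x}\right)^{1/p}\mathrm{d}x\,\mathrm{d}y\right)^{\!1/q}.
\]
Next, I would evaluate the inner integral with respect to $y$ (respectively $x$) by the change of variable $t=y/x$ (respectively $t=x/y$), reducing it to the classical Euler integral $\int_0^\infty t^{-1/q}/(1+t)\,\mathrm{d}t=\pi/\sin(\pi/p)$. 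The two double integrals therefore collapse to $\frac{\pi}{\sin(\pi/p)}\|f\|_p^p$ and $\frac{\pi}{\sin(\pi/p)}\|g\|_q^q$, and multiplying gives precisely the claimed constant $\pi/\sin(\pi/p)$.

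For the sharpness statement, I would test on the regularized pair $f_\varepsilon(x)=x^{-1/p-\varepsilon}\chi_{[1,\infty)}(x)$ and $g_\varepsilon(y)=y^{-1/q-\varepsilon}\chi_{[1,\infty)}(y)$, for which $\|f_\varepsilon\|_p\|g_\varepsilon\|_q=(\varepsilon p)^{-1/p}(\varepsilon q)^{-1/q}$. Computing the left-hand side of \eqref{H.H.Eq.2} for this pair, again through $t=y/x$ and swapping the order of integration, should give a lower bound of the form $\bigl(\pi/\sin(\pi/p)-o(1)\bigr)\|f_\varepsilon\|_p\|g_\varepsilon\|_q$ as $\varepsilon\to 0^+$, which is incompatible with any strictly smaller constant in \eqref{H.H.Eq.2}.

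The main obstacle is the sharpness half: the formal extremizer $x^{-1/p}$ fails to lie in $L^p(0,\infty)$, so one must regularize, carefully control the error coming from the cut-off at $x=1$, and only then let $\varepsilon\downarrow 0$. By contrast, once the correct weight $(x/y)^{1/(pq)}$ is identified the upper bound is essentially automatic, amounting only to a beta-integral computation.
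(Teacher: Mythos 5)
The paper does not actually prove Theorem A; it quotes it as Hardy's classical result with references (\cite{H1}, \cite{HLP}, \cite{KPS}), so your proposal can only be measured against the standard classical argument. Your first half is exactly that argument and is correct: the splitting with the weight $(x/y)^{1/(pq)}$, H\"older with exponents $p,q$, and the evaluation $\int_0^\infty t^{-1/q}(1+t)^{-1}\,\mathrm{d}t=\pi/\sin(\pi/p)$ (Tonelli justifies the order of integration since everything is nonnegative) give the upper bound with the constant $\pi/\sin(\pi/p)$.

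The sharpness half, however, has a genuine gap: your specific test family does not work for $p\neq 2$. With $f_\varepsilon(x)=x^{-1/p-\varepsilon}\chi_{[1,\infty)}(x)$ and $g_\varepsilon(y)=y^{-1/q-\varepsilon}\chi_{[1,\infty)}(y)$ one gets, after the substitution $y=xt$,
\begin{align*}
\int_1^\infty\!\!\int_1^\infty \frac{f_\varepsilon(x)g_\varepsilon(y)}{x+y}\,\mathrm{d}y\,\mathrm{d}x
=\int_1^\infty x^{-1-2\varepsilon}\left(\int_{1/x}^\infty\frac{t^{-1/q-\varepsilon}}{1+t}\,\mathrm{d}t\right)\mathrm{d}x
=\frac{\pi}{\sin(\pi/p)}\cdot\frac{1}{2\varepsilon}\,(1+o(1)),
\end{align*}
while $\|f_\varepsilon\|_p\|g_\varepsilon\|_q=(p\varepsilon)^{-1/p}(q\varepsilon)^{-1/q}=\varepsilon^{-1}p^{-1/p}q^{-1/q}$. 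Hence the ratio of the two sides tends to $\frac{\pi}{\sin(\pi/p)}\cdot\frac{p^{1/p}q^{1/q}}{2}$, and since $\frac{\ln p}{p}+\frac{\ln q}{q}\le\ln 2$ with equality only at $p=q=2$ (concavity of $-t\ln t$ applied to $1/p+1/q=1$), this limit is \emph{strictly smaller} than $\pi/\sin(\pi/p)$ whenever $p\neq 2$. So your claimed lower bound $\bigl(\pi/\sin(\pi/p)-o(1)\bigr)\|f_\varepsilon\|_p\|g_\varepsilon\|_q$ is false for this family, and no conclusion about best possibility follows. The repair is the standard balanced perturbation: take $f_\varepsilon(x)=x^{-(1+\varepsilon)/p}\chi_{(1,\infty)}(x)$ and $g_\varepsilon(y)=y^{-(1+\varepsilon)/q}\chi_{(1,\infty)}(y)$, so that $\|f_\varepsilon\|_p^p=\|g_\varepsilon\|_q^q=1/\varepsilon$; then the same computation gives the left-hand side $\ge\bigl(\frac{\pi}{\sin(\pi/p)}-o(1)\bigr)\frac{1}{\varepsilon}$ (the cut-off error being $O(1)$ uniformly in $\varepsilon$), which does force the constant to be best possible.
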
 

From the history it is known that the main motivation for Hardy to begin the study, which was finalized in 1925 his famous sharp inequality (see \cite{H})
\begin{align*}
\int_0^\infty \left(\dfrac{1}{x}\int_0^x f(t)\mathrm{d}t\right)^p\mathrm{d}x\leq \left(\dfrac{p}{p-1}\right)^p\int_0^\infty f^p(x)\mathrm{d}x,\quad p>1,
\end{align*}
was to find an easy proof of Hilbert's inequality \eqref{F.V.H.H.Eq.1}. For these reasons the inequality \eqref{H.H.Eq.2} is called the Hardy-Hilbert inequality.

It has after that been an almost unbelievable development of what is called Hardy-type inequalities. For this more than 100 years of development we refer to the books \cite{KPS}, \cite{MRDS} and the references therein. We also refer to the recent review paper \cite{PS}.

It has not been the same intensive development of the Hardy-Hilbert type inequalities. However, it has especially during recent years been an increasing interest also in this case and several results dealing with numerous variants, generalizations and extensions have appeared in the literature, see \cite{HLP}, \cite{K}, \cite{LPS2}, \cite{P}, \cite{WM}, \cite{Y1}, \cite{Y} and the references therein. There is now a large number of results generalizing \eqref{H.H.Eq.2} by replacing the kernel $k(x,y)=1/(x+y)$ with some other kernels with similar homogeneity properties. In several cases such results are referred to as Hardy-Hilbert's type inequalities.\\

The main aim of this paper is to initiate the development of Hardy-Hilbert type inequalities on a homogeneous group $\mathbb{G}$ equipped with a quasi-norm $|\cdot|$. For this introduction we have chosen the following two results from the literature mentioned above:

\begin{ThB}\label{H.I.Th.2}
Let $p>1$, $q=p/(p-1)$, and let $f$ and $g$ be nonnegative measurable functions such that $f\in L^p(0,\infty)$ and $g\in L^q(0,\infty)$. Then, the Hilbert integral inequality \eqref{H.H.Eq.2} holds if and only if the following integral inequality holds:
\begin{align}\label{H.H.Eq.6}
\left(\int_0^\infty\left(\int_0^\infty \dfrac{f(x)}{x+y}\, \mathrm{d}x\right)^p\mathrm{d}y\right)^{1/p}\leq \dfrac{\pi}{\sin \left(\frac{\pi}{p}\right)}\left(\int_0^\infty f^p(x) \mathrm{d}x\right)^{1/p}.
\end{align} 
\end{ThB}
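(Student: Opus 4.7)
The plan is to prove the two directions of the equivalence separately, using H\"older's inequality together with the standard duality trick for the passage from the integral form \eqref{H.H.Eq.6} to the bilinear form \eqref{H.H.Eq.2} and back.

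First I would show that \eqref{H.H.Eq.6} implies \eqref{H.H.Eq.2}. Writing the left-hand side of \eqref{H.H.Eq.2} as
\[
\int_0^\infty g(y)\left(\int_0^\infty \frac{f(x)}{x+y}\,\mathrm{d}x\right)\mathrm{d}y,
\]
a direct application of H\"older's inequality with exponents $p$ and $q$ gives the bound
\[
\left(\int_0^\infty\left(\int_0^\infty \frac{f(x)}{x+y}\,\mathrm{d}x\right)^p\mathrm{d}y\right)^{1/p}\left(\int_0^\infty g^q(y)\,\mathrm{d}y\right)^{1/q},
\]
and then invoking \eqref{H.H.Eq.6} on the first factor yields \eqref{H.H.Eq.2} with the same constant $\pi/\sin(\pi/p)$.

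For the converse, assume \eqref{H.H.Eq.2} and introduce the inner function $F(y):=\int_0^\infty \frac{f(x)}{x+y}\,\mathrm{d}x$ and the dual test function $g(y):=F(y)^{p-1}$. Using $(p-1)q=p$, we have
\[
\int_0^\infty F(y)^p\,\mathrm{d}y=\int_0^\infty F(y)g(y)\,\mathrm{d}y=\int_0^\infty\int_0^\infty \frac{f(x)g(y)}{x+y}\,\mathrm{d}x\,\mathrm{d}y,
\]
and $\int_0^\infty g^q(y)\,\mathrm{d}y=\int_0^\infty F(y)^p\,\mathrm{d}y$. Denoting $I:=\int_0^\infty F(y)^p\,\mathrm{d}y$ and applying \eqref{H.H.Eq.2}, we obtain
\[
I\leq \frac{\pi}{\sin(\pi/p)}\,\|f\|_{L^p}\,I^{1/q},
\]
so that dividing by $I^{1/q}$ and using $1-1/q=1/p$ gives \eqref{H.H.Eq.6}.

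The main obstacle in this last step is that the division by $I^{1/q}$ requires $0<I<\infty$. I would handle this by a standard truncation: replace $f$ by $f_n:=f\cdot\chi_{\{1/n\le x\le n\}}$, for which $F_n$ is bounded and $I_n$ is manifestly finite (and positive if $f\not\equiv 0$), so that the division is legitimate and \eqref{H.H.Eq.6} holds for $f_n$. Monotone convergence in both sides then delivers \eqref{H.H.Eq.6} for $f$; in particular, if the right-hand side is finite, so must be the left. Finally, since both inequalities are shown to hold with the same constant $\pi/\sin(\pi/p)$, which is known to be sharp in \eqref{H.H.Eq.2} by Theorem A, the above argument also transfers sharpness to \eqref{H.H.Eq.6}.
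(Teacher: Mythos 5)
Your argument is correct and is essentially the same duality argument the paper relies on (it cites \cite[Lemma 7.69]{KPS} for Theorem B and reproduces exactly this H\"older-plus-test-function scheme in its own equivalence Lemma in Section \ref{Sec.4} and in the proof of Theorem \ref{H.I.Th.4}). Your truncation step to justify dividing by $I^{1/q}$ is a sensible extra precaution that the paper's version simply glosses over.
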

For a much more general such equivalence result we refer to \cite[Lemma 7.69]{KPS}.

\begin{ThC}\label{ThemC}
If $p>1, k>1, \frac{1}{p}+\frac{1}{q}=\frac{1}{k}+\frac{1}{m}=1$, and $\lambda>0$, $f$ and $g$ are nonnegative functions such that $0< \int_0^\infty x^{p(1-\frac{\lambda}{k})-1}f^p(x)\mathrm{d}x<\infty$ and $0< \int_0^\infty y^{q(1-\frac{\lambda}{m})-1}g^q(y)\mathrm{d}y<\infty$, then we have
\begin{align}\label{F.H.Th.1.Eq.4}
\begin{split}
&\int_0^\infty\int_0^\infty \dfrac{f(x)g(y)}{x^\lambda+y^\lambda}\, \mathrm{d}x\mathrm{d}y\\
&\leq \dfrac{\pi}{\lambda\sin \left(\frac{\pi}{m}\right)}\left(\int_0^\infty x^{p(1-\frac{\lambda}{k})-1}f^p(x)\, \mathrm{d}x\right)^{1/p}\left(\int_0^\infty y^{q(1-\frac{\lambda}{m})-1}g^q(y) \mathrm{d}y\right)^{1/q},
\end{split}
\end{align}
where the constant factor $\dfrac{\pi}{\lambda\sin\left(\pi/m\right)}$ is the best possible. 
\end{ThC}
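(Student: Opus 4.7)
The natural strategy is the classical \emph{weight function} trick combined with H\"older's inequality and an Euler/Beta integral evaluation. First, I would introduce two exponents $\mu=(1-\lambda/k)/q$ and $\nu=(1-\lambda/m)/p$ and rewrite the integrand as the product
\begin{equation*}
\frac{f(x)g(y)}{x^\lambda+y^\lambda}
= \underbrace{\frac{f(x)}{(x^\lambda+y^\lambda)^{1/p}}\cdot\frac{x^{\mu}}{y^{\nu}}}_{A(x,y)}\;\cdot\;
  \underbrace{\frac{g(y)}{(x^\lambda+y^\lambda)^{1/q}}\cdot\frac{y^{\nu}}{x^{\mu}}}_{B(x,y)}.
\end{equation*}
By H\"older's inequality on $(0,\infty)\times(0,\infty)$ with conjugate indices $p,q$, the left-hand side of \eqref{F.H.Th.1.Eq.4} is dominated by $\|A\|_{L^p}\,\|B\|_{L^q}$.

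Next, I would evaluate the resulting inner integrals. For $\|A\|_{L^p}^p$ the inner integral over $y$, after the substitution $t=y/x$ and then $s=t^\lambda$, reduces to
\begin{equation*}
\int_0^\infty \frac{dy}{(x^\lambda+y^\lambda)\,y^{p\nu}}
= x^{1-\lambda-p\nu}\cdot\frac{1}{\lambda}\int_0^\infty \frac{s^{(1-p\nu)/\lambda-1}}{1+s}\,ds.
\end{equation*}
The choice $p\nu=1-\lambda/m$ forces the exponent to be $1/m-1$, and Euler's classical identity $\int_0^\infty s^{a-1}/(1+s)\,ds=\pi/\sin(\pi a)$ produces the constant $\pi/\bigl(\lambda\sin(\pi/m)\bigr)$. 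A direct check (using $1/p+1/q=1$ and $1/k+1/m=1$) shows that the remaining power of $x$ matches exactly $x^{p(1-\lambda/k)-1}$, so after integrating in $x$ one recovers the weighted $L^p$ norm of $f$ in \eqref{F.H.Th.1.Eq.4}. The companion estimate for $\|B\|_{L^q}^q$ is handled symmetrically via $t=x/y$; it produces the constant $\pi/\bigl(\lambda\sin(\pi/k)\bigr)$, which equals $\pi/\bigl(\lambda\sin(\pi/m)\bigr)$ because $\sin(\pi/k)=\sin(\pi(1-1/m))=\sin(\pi/m)$. Multiplying $\|A\|_{L^p}$ and $\|B\|_{L^q}$ therefore yields precisely the advertised inequality with constant $\pi/\bigl(\lambda\sin(\pi/m)\bigr)$.

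Finally, for the \emph{sharpness} statement, I would argue by contradiction in the standard way. Suppose some smaller constant $C_0<\pi/\bigl(\lambda\sin(\pi/m)\bigr)$ would still work. Test with the one-parameter family
\begin{equation*}
f_\varepsilon(x)=x^{-\lambda/k-\varepsilon/p}\chi_{[1,\infty)}(x),\qquad
g_\varepsilon(y)=y^{-\lambda/m-\varepsilon/q}\chi_{[1,\infty)}(y),\qquad \varepsilon>0.
\end{equation*}
Each weighted $L^p$ (resp.\ $L^q$) norm on the right-hand side of \eqref{F.H.Th.1.Eq.4} is easily computed to be of order $\varepsilon^{-1/p}$ (resp.\ $\varepsilon^{-1/q}$). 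On the left, Fubini and the substitution $t=y/x$ allow one to pull out a factor $x^{-1-\varepsilon}$ times an integral of the form $\int_{1/x}^\infty t^{-\lambda/m-\varepsilon/q}(1+t^\lambda)^{-1}\,dt$; letting $\varepsilon\to 0^+$ and interchanging $\lim$ and $\int$ (justified by monotone/dominated convergence), the leading term behaves like $\varepsilon^{-1}\cdot\pi/\bigl(\lambda\sin(\pi/m)\bigr)$, contradicting the assumed bound $C_0\,\varepsilon^{-1}$ with $C_0$ strictly smaller.

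The computational steps are mostly routine once the weight function is chosen; the only genuinely delicate point is the correct choice of $(\mu,\nu)$ so that the two H\"older factors simultaneously (i) produce the prescribed weights $x^{p(1-\lambda/k)-1}$ and $y^{q(1-\lambda/m)-1}$ and (ii) reduce the inner integrals to the same Beta-type constant. This forced balance, together with the identity $1/k+1/m=1$, is what makes the final constant sharp.
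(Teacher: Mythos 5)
Your H\"older/weight-function argument for the inequality itself is correct: with $\mu=(1-\lambda/k)/q$, $\nu=(1-\lambda/m)/p$ the exponent bookkeeping does close up (indeed $p\mu+1-\lambda-p\nu=p(1-\lambda/k)-1$ and $q\nu+1-\lambda-q\mu=q(1-\lambda/m)-1$, using $1/p+1/q=1/k+1/m=1$), and the Beta-integral evaluation gives the constant $\pi/(\lambda\sin(\pi/m))$ in both factors. Note that this is a different route from the paper, which does not prove Theorem C directly: it quotes it from the literature and then, in Section \ref{Sec.4}, absorbs the weights into $f$ and $g$ so that \eqref{F.H.Th.1.Eq.4} becomes a bilinear inequality with the kernel $k(x,y)=x^{-1+\frac{\lambda}{k}+\frac{1}{p}}y^{-1+\frac{\lambda}{m}+\frac{1}{q}}/(x^\lambda+y^\lambda)$, homogeneous of degree $-1$, and invokes the general Theorem \ref{H.K.Thm.4.1} (sharp constant $C_p^\ast$) together with the equivalence lemma. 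Your approach is self-contained and elementary; the paper's approach trades that for generality, since the same machine covers all $(-1)$-homogeneous kernels at once.

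There is, however, a genuine gap in your sharpness argument: the test functions are wrong. With $f_\varepsilon(x)=x^{-\lambda/k-\varepsilon/p}\chi_{[1,\infty)}(x)$ the weighted norm is $\int_1^\infty x^{p(1-2\lambda/k)-1-\varepsilon}\,\mathrm{d}x$, which is \emph{not} of order $\varepsilon^{-1}$: it stays bounded as $\varepsilon\to0^+$ if $\lambda/k>1/2$ and diverges for every fixed $\varepsilon$ if $\lambda/k<1/2$; similarly for $g_\varepsilon$. Moreover, in the left-hand side the substitution $y=xt$ extracts $x^{1-2\lambda-\varepsilon}$, not $x^{-1-\varepsilon}$, and the limiting inner integral $\int_0^\infty t^{-\lambda/m}(1+t^\lambda)^{-1}\,\mathrm{d}t$ equals $\pi/(\lambda\sin(\pi/m))$ only when $\lambda=1$; so the contradiction you aim for never materializes except in that special case. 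The fix is standard: take $f_\varepsilon(x)=x^{\frac{\lambda}{k}-1-\frac{\varepsilon}{p}}\chi_{[1,\infty)}(x)$ and $g_\varepsilon(y)=y^{\frac{\lambda}{m}-1-\frac{\varepsilon}{q}}\chi_{[1,\infty)}(y)$, for which both weighted integrals equal $1/\varepsilon$, while the left-hand side equals $\int_1^\infty x^{-1-\varepsilon}\bigl(\int_{1/x}^\infty t^{\frac{\lambda}{m}-1-\frac{\varepsilon}{q}}(1+t^\lambda)^{-1}\,\mathrm{d}t\bigr)\mathrm{d}x\geq \frac{1}{\varepsilon}\bigl(\frac{\pi}{\lambda\sin(\pi/m)}-o(1)\bigr)$ as $\varepsilon\to0^+$ (after checking that the missing piece $\int_0^{1/x}$ contributes $O(1)$), which yields $C_0\geq\pi/(\lambda\sin(\pi/m))$. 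With that correction your proof is complete; as written, the sharpness part fails.
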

In Section \ref{Sec.4} of this paper we rewrite \eqref{F.H.Th.1.Eq.4} in an equivalent form, so that it is known from the general theory (see \cite{LPS2} and the books \cite{KS}, \cite{KPS} and c.f. also \cite{HLP} which shows that already Hardy was aware of this more general idea). We claim that several results in the literature can be put to this general frame, see our Section \ref{Sec.4} especially Theorem \ref{H.K.Thm.4.6}.

The organization and main results of this paper can be described as follows: In order not to disturb our discussions later on we present some preliminaries on homogeneous groups in Section \ref{Sec.2}. In Section \ref{Sec.3} we generalize and unify Theorems A -- C in this homogeneous group frame (see Theorems \ref{H.I.Th.4} and Proposition \ref{H.H.Th.3.5}). We also point out how our Theorem \ref{H.I.Th.4} reads in $\mathbb{R}^n$ on a form we believe is also new (see Corollary \ref{F.H.H.Cor3.3}). To be able to put Proposition \ref{H.H.Th.3.5} (and similar generalizations) to a more general kernel frame we use Section \ref{Sec.4} to present, unify and slightly generalize the classical theory to this general kernel case. Our main Theorem is summarized in Theorem \ref{H.K.Thm.4.6}. Inspired by this theory in Section \ref{Sec.5} we generalize also our Hardy-Hilbert theory on homogeneous groups to a general kernel case. The main results are given in Theorems \ref{G.H.K.Thm.4.1}, \ref{G.H.K.Thm.4.6} and Corollary \ref{G.H.H.Corr.5.6}.  
\section{Preliminaries}\label{Sec.2}
In this section, we recall the basics of homogeneous groups. For further reading on homogeneous groups and other inequalities on homogeneous groups, we refer to the monographs \cite{FR}, \cite{FH}, \cite{MRDS} and references therein. \\

A Lie group $\mathbb{G}$ (identified with $(\mathbb{R}^N, \circ)$) is called a homogeneous group if it is equipped with a dilation mapping
\begin{align*}
D_\lambda: \mathbb{R}^N\to \mathbb{R}^N, \, \lambda>0,
\end{align*}
defined as
\begin{align*}
D_\lambda(x)=(\lambda^{v_1}x_1, \lambda^{v_2}x_2,...,\lambda^{v_N}x_N), v_1,v_2,...,v_N>0,
\end{align*}
which is an automorphism of the group $\mathbb{G}$ for each $\lambda>0$. Here and in the sequel, we will denote the image of $x\in \mathbb{G}$ under $D_\lambda$ by $\lambda(x)$ or, simply $\lambda x$. The homogeneous dimension $Q$ of a homogeneous Lie group $\mathbb{G}$ is defined by 
\begin{align*}
Q=v_1+v_2+\cdots+v_N.
\end{align*}
It is well known that a homogeneous group is necessarily nilpotent and unimodular. There are different particular examples of homogeneous groups such as the Euclidean space $\mathbb{R}^N$ (in which case $Q=N$), the Heisenberg group, as well as general stratified groups (homogeneous Carnot groups) and graded groups.

The Haar measure $\mathrm{d}x$ on $\mathbb{G}$ is nothing but the Lebesgue measure on $\mathbb{R}^N$. \\
Let us denote the volume of a measurable set $\omega\subseteq \mathbb{G}$ by $|\omega|$. Then we have the following consequences: for $\lambda>0$
\begin{align}\label{H.I.Eq.2}
|D_\lambda(\omega)|=\lambda^Q|\omega| \text{ and } \int_{\mathbb{G}} f(\lambda x)\mathrm{d}x=\lambda^{-Q}\int_{\mathbb{G}} f(x)\mathrm{d}x.
\end{align}
\begin{definition}
A quasi-norm on a homogeneous group $\mathbb{G}$ is any continuous function $|\cdot|: \mathbb{G}\to [0, \infty)$ satisfying the following conditions:
\begin{enumerate}
    \item[(i)] $|x|=|x^{-1}|$ for all $x\in \mathbb{G}$
     \item[(ii)] $|\lambda x|=\lambda|x|$ for all $x\in \mathbb{G}$ and $\lambda>0$
      \item[(iii)] $|x|=0 \Longleftrightarrow x=0$.
\end{enumerate}
\end{definition}

Before we finish this section, we need to mention the following {\it polar decomposition} on a homogeneous group $\mathbb{G}$ since it plays an important role in the proofs of our main results, see e.g. \cite[Proposition 1.2.10]{MRDS}
\begin{lemma}
Let 
\begin{align*}
\mathfrak{S}=\{x\in \mathbb{G}: |x|=1\}\subset \mathbb{G}
\end{align*}
be the unit sphere with respect to the quasi-norm $|\cdot|$. Then there is a unique Radon measure $\sigma$ on $\mathfrak{S}$ such that for all $f\in L^1(\mathbb{G})$, 
\begin{align}\label{H.I.Eq.3}
    \int_{\mathbb{G}} f(x)\mathrm{d}x= \int_0^\infty \int_{\mathfrak{S}} f(ry)r^{Q-1} \mathrm{d}\sigma(y)\mathrm{d}r.
\end{align}
\end{lemma}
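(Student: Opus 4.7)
The plan is to construct $\sigma$ explicitly from the Haar measure on $\mathbb{G}$ via the dilation structure, verify the decomposition first on indicators of ``spherical shells'', then extend by approximation, and finally argue uniqueness by testing against these same indicators.

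First I would consider the map $\Phi:(0,\infty)\times \mathfrak{S}\to \mathbb{G}\setminus\{0\}$ defined by $\Phi(r,y)=ry$. Axioms (ii) and (iii) of the quasi-norm together with the continuity of $|\cdot|$ and of the dilation maps $D_\lambda$ imply that $\Phi$ is a homeomorphism, with inverse $x\mapsto(|x|,\,D_{|x|^{-1}}x)$. In particular $\mathfrak{S}$ is compact (closed and bounded in $\mathbb{R}^N$), and for every Borel set $E\subset \mathfrak{S}$ the ``unit cone'' $E^{*}:=\{ry:0<r\le 1,\,y\in E\}$ is Borel in $\mathbb{G}$. This allows me to \emph{define}
\begin{equation*}
\sigma(E):=Q\cdot |E^{*}|,
\end{equation*}
and countable additivity and local finiteness of $\sigma$ are inherited from Lebesgue measure.

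Next I would use the scaling identity $|D_{R}(A)|=R^{Q}|A|$ from \eqref{H.I.Eq.2}, applied to $A=E^{*}$, to obtain $|\{ry:0<r\le R,\,y\in E\}|=R^{Q}|E^{*}|$. Subtraction then yields, for $0<a<b<\infty$,
\begin{equation*}
\bigl|\{ry:a<r\le b,\,y\in E\}\bigr|=(b^{Q}-a^{Q})|E^{*}|=\int_{a}^{b}\!r^{Q-1}\,dr\cdot\sigma(E)=\int_{a}^{b}\!\!\int_{\mathfrak{S}}\chi_{E}(y)\,r^{Q-1}\,d\sigma(y)\,dr.
\end{equation*}
Hence the desired identity holds when $f$ is the indicator of any such shell. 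Since these shells generate the Borel $\sigma$-algebra of $\mathbb{G}\setminus\{0\}$ (via $\Phi$), linearity extends the formula to simple functions, and monotone convergence, combined with Fubini on the product measure $r^{Q-1}\,dr\otimes d\sigma$, extends it to all nonnegative Borel $f$, and thus to $f\in L^{1}(\mathbb{G})$. Uniqueness is immediate: if $\sigma'$ also satisfies the formula, taking $f=\chi_{E^{*}}$ gives $|E^{*}|=(\int_{0}^{1}r^{Q-1}\,dr)\sigma'(E)=\sigma'(E)/Q$, so $\sigma'=\sigma$.

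The only step requiring genuine care is verifying that $\Phi$ is a homeomorphism, since the dilations $D_{\lambda}$ are anisotropic and $\mathfrak{S}$ need not look like a Euclidean sphere; once this is in hand, the construction of $\sigma$, the change-of-variables identity, and the uniqueness statement are all standard consequences of the scaling property \eqref{H.I.Eq.2} of the Haar measure.
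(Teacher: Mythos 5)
Your construction is correct, and it is essentially the standard proof of the polar decomposition: the paper itself does not prove this lemma at all, but simply quotes it from the literature (see \cite[Proposition 1.2.10]{MRDS}, going back to Folland--Stein), where $\sigma$ is obtained in the same way you do it, namely as the normalized Haar measure of the cone $E^{*}$ over $E\subset\mathfrak{S}$, with the shell identity coming from the scaling law \eqref{H.I.Eq.2}. Two places in your write-up are asserted rather than argued and deserve a line each, though neither is a real obstruction. First, the compactness of $\mathfrak{S}$: closedness is clear from continuity, but boundedness does not follow directly from axioms (i)--(iii); the standard route is to compare $|\cdot|$ with a reference homogeneous quasi-norm such as $\rho(x)=\sum_{i}|x_{i}|^{1/v_{i}}$, whose unit sphere is obviously compact, so that $m\rho\le|\cdot|\le M\rho$ with $m>0$ by continuity and positivity of $|\cdot|$ on $\{\rho=1\}$ together with homogeneity; this gives boundedness of $\mathfrak{S}$ and also the finiteness of $|E^{*}|$, which you use both for $\sigma$ being a finite (hence Radon) measure and for $\chi_{E^{*}}\in L^{1}$ in the uniqueness step. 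Second, passing from indicators of shells $\Phi\bigl((a,b]\times E\bigr)$ to indicators of arbitrary Borel sets is not just ``linearity'': the correct statement is that the shells form a $\pi$-system generating the Borel $\sigma$-algebra of $(0,\infty)\times\mathfrak{S}$, and the pushforward of Haar measure under $\Phi^{-1}$ and the product measure $r^{Q-1}\,\mathrm{d}r\otimes\mathrm{d}\sigma$ are $\sigma$-finite and agree on this $\pi$-system, so they coincide by the uniqueness theorem for measures (Dynkin's $\pi$--$\lambda$ theorem); after that, your extension by simple functions, monotone convergence, and the splitting of $f$ into positive and negative parts for $f\in L^{1}(\mathbb{G})$ is fine, as is your uniqueness argument for $\sigma$.
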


Here and in the sequel, we use the following notations. A quasi-ball in the homogeneous group $\mathbb{G}$ with radius $|x|$, $x\in \mathbb{G}$, and centered at the origin will be denoted by $B(0, |x|)$. We denote the surface measure of the unit sphere $\mathfrak{S}$ in $\mathbb{G}$ by $|\mathfrak{S}|$.

 The Haar measure of the quasi-ball $B(0, |x|)$ denoted by $|B(0, |x|)|$, can be calculated by using polar decomposition \eqref{H.I.Eq.3} as
\begin{align}\label{H.I.Eq.4}
    |B(0, |x|)|&=\int_{B(0, |x|)} \mathrm{d}y=\int_{0}^{|x|} r^{Q-1}\left(\int_{\mathfrak{S}} \mathrm{d}\sigma(t)\right)\mathrm{d}r=\dfrac{|\mathfrak{S}|}{Q}|x|^Q.
\end{align}

\section{Some introductory Hardy-Hilbert type inequalities on homogeneous groups}\label{Sec.3}
Our first main result is the following unification and generalization of Theorems A and B:
\begin{theorem}\label{H.I.Th.4}
Let $\mathbb{G}$ be a homogeneous group with the homogeneous dimension $Q$ equipped with a quasi-norm $|\cdot|$. Let $p>1$, $q=p/(p-1)$, and let $f$ and $g$ be nonnegative and measurable functions on $\mathbb{G}$. Moreover, assume that $F(r)\in L^p(0,\infty)$ and $G(s)\in L^q(0,\infty)$, where
\begin{align}\label{H.H.Eq.7.1}
F(r)=r^\frac{Q-1}{p}\int_\mathfrak{S} f(rx)\mathrm{d}\sigma(x) \text{ and } G(s)=s^\frac{Q-1}{q}\int_\mathfrak{S} g(sy)\mathrm{d}\sigma(y).
\end{align}
Then, the following Hardy-Hilbert type inequality holds:
\begin{align}\label{H.H.Eq.7}
\begin{split}
\int_\mathbb{G}\int_\mathbb{G} \dfrac{f(x)g(y)}{|x|+|y|} \left(\dfrac{|x|}{|B(0,|x|)|}\right)^{1/q}\left(\dfrac{|y|}{|B(0,|y|)|}\right)^{1/p} \mathrm{d}x\mathrm{d}y\\
\leq \dfrac{Q \pi}{\sin \left(\frac{\pi}{p}\right)} \left(\int_\mathbb{G} f^p(x) \mathrm{d}x\right)^{1/p}\left(\int_\mathbb{G} g^{q}(y) \mathrm{d}y\right)^{1/q}.
\end{split}
\end{align}

Moreover, the following integral inequalities hold and they are both equivalent to \eqref{H.H.Eq.7}:
\begin{align}\label{H.H.Eq.11}
\begin{split}
\left(\int_\mathbb{G}\left(\int_\mathbb{G} \dfrac{f(x)}{|x|+|y|} \left(\dfrac{|x|}{|B(0,|x|)|}\right)^{1/q} \mathrm{d}x\right)^p\dfrac{|y|}{|B(0,|y|)|} \mathrm{d}y\right)^\frac{1}{p}\\
\leq \dfrac{Q \pi}{\sin \left(\frac{\pi}{p}\right)} \left(\int_\mathbb{G} f^p(x) \mathrm{d}x\right)^{1/p},
\end{split}
\end{align}
and 
\begin{align}\label{H.H.Eq.112}
\begin{split}
\left(\int_\mathbb{G}\left(\int_\mathbb{G} \dfrac{g(y)}{|x|+|y|} \left(\dfrac{|y|}{|B(0,|y|)|}\right)^{1/p} \mathrm{d}y\right)^q\dfrac{|x|}{|B(0,|x|)|} \mathrm{d}x\right)^\frac{1}{q}\\
\leq \dfrac{Q \pi}{\sin \left(\frac{\pi}{p}\right)} \left(\int_\mathbb{G} g^q(y) \mathrm{d}y\right)^{1/q}.
\end{split}
\end{align}
The constant $\dfrac{Q \pi}{\sin\left(\pi/p\right)}$ is sharp in all inequalities \eqref{H.H.Eq.7} -- \eqref{H.H.Eq.112}.
\end{theorem}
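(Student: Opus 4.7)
The plan is to reduce \eqref{H.H.Eq.7} to the classical Hardy--Hilbert inequality \eqref{H.H.Eq.2} by means of the polar decomposition \eqref{H.I.Eq.3}, and then to derive the equivalent forms \eqref{H.H.Eq.11} and \eqref{H.H.Eq.112} by the same duality argument that connects \eqref{H.H.Eq.2} and \eqref{H.H.Eq.6} in Theorem B.

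First I would rewrite the weight. Using \eqref{H.I.Eq.4}, one has
\[
\frac{|x|}{|B(0,|x|)|}=\frac{Q}{|\mathfrak{S}|\,|x|^{Q-1}},
\]
and similarly for $y$. Substituting into the left-hand side of \eqref{H.H.Eq.7} and applying \eqref{H.I.Eq.3} twice (once for each integral in $\mathbb{G}$), the Jacobians $r^{Q-1}$ and $s^{Q-1}$ combine with the weights $r^{-(Q-1)/q}$ and $s^{-(Q-1)/p}$ to produce exponents $(Q-1)/p$ and $(Q-1)/q$ respectively. Since $1/p+1/q=1$, the prefactor collapses to $Q/|\mathfrak{S}|$, and the left-hand side becomes
\[
\frac{Q}{|\mathfrak{S}|}\int_0^\infty\!\!\int_0^\infty \frac{F(r)G(s)}{r+s}\,\mathrm{d}r\,\mathrm{d}s,
\]
with $F,G$ as in \eqref{H.H.Eq.7.1}. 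Now I would apply Theorem A to this one-dimensional double integral, which supplies the factor $\pi/\sin(\pi/p)$.

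To convert the $L^p$ and $L^q$ norms of $F$ and $G$ back to norms of $f$ and $g$ on $\mathbb{G}$, I would use Hölder's inequality on the sphere:
\[
\left(\int_\mathfrak{S} f(rx)\,\mathrm{d}\sigma(x)\right)^{p}\le |\mathfrak{S}|^{p-1}\int_\mathfrak{S} f^p(rx)\,\mathrm{d}\sigma(x),
\]
and the analogue for $g$. Integrating in $r$ (resp.\ $s$) and using the polar decomposition again gives $\|F\|_{L^p(0,\infty)}^p\le |\mathfrak{S}|^{p-1}\|f\|_{L^p(\mathbb{G})}^p$ and $\|G\|_{L^q(0,\infty)}^q\le |\mathfrak{S}|^{q-1}\|g\|_{L^q(\mathbb{G})}^q$. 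Combining these with the previous step produces a factor $|\mathfrak{S}|^{(p-1)/p+(q-1)/q}=|\mathfrak{S}|$, which cancels with the $1/|\mathfrak{S}|$ from the polar step, leaving exactly $Q\pi/\sin(\pi/p)$.

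For the sharpness, I would test with radial functions $f(x)=\varphi(|x|)$, $g(y)=\psi(|y|)$. Then Hölder on $\mathfrak{S}$ becomes an equality, and the whole chain of estimates collapses to the classical inequality \eqref{H.H.Eq.2} applied to $\varphi$ and $\psi$ (up to harmless powers of $r$). Taking for $\varphi,\psi$ the standard near-extremizing family for \eqref{H.H.Eq.2} then shows that no smaller constant than $Q\pi/\sin(\pi/p)$ can work. For the equivalence of \eqref{H.H.Eq.7}, \eqref{H.H.Eq.11} and \eqref{H.H.Eq.112}, the implication \eqref{H.H.Eq.11}$\Rightarrow$\eqref{H.H.Eq.7} (and \eqref{H.H.Eq.112}$\Rightarrow$\eqref{H.H.Eq.7}) is immediate from Hölder applied in $y$ (resp.\ $x$) with the weight $|y|/|B(0,|y|)|$ (resp.\ $|x|/|B(0,|x|)|$). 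The converse implications are the standard optimization trick: plug into \eqref{H.H.Eq.7} the choice
\[
g(y)=\left(\int_\mathbb{G}\frac{f(x)}{|x|+|y|}\Big(\tfrac{|x|}{|B(0,|x|)|}\Big)^{1/q}\mathrm{d}x\right)^{p-1}\Big(\tfrac{|y|}{|B(0,|y|)|}\Big)^{1/p}
\]
(truncated if necessary), simplify using $q(p-1)=p$, and divide through; an analogous choice of $f$ yields \eqref{H.H.Eq.112}. The sharpness transfers between the three inequalities because these reductions preserve the constant.

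The main obstacle I anticipate is purely bookkeeping: keeping track of the interplay between the polar Jacobian $r^{Q-1}$, the Haar-volume factor from $|B(0,|x|)|$, and the exponents $1/p,1/q$ so that the cancellations conspire to give exactly $Q\pi/\sin(\pi/p)$. Once that algebra is organized, every analytic input (classical Hardy--Hilbert, Hölder on $\mathfrak{S}$, and the standard extremizer argument) is already available.
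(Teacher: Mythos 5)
Your argument follows the paper's proof essentially step by step: you use the polar decomposition \eqref{H.I.Eq.3} together with \eqref{H.I.Eq.4} to rewrite the left-hand side of \eqref{H.H.Eq.7} as $\frac{Q}{|\mathfrak{S}|}\int_0^\infty\int_0^\infty \frac{F(r)G(s)}{r+s}\,\mathrm{d}r\,\mathrm{d}s$, then apply the classical inequality \eqref{H.H.Eq.2}, and then H\"older (equivalently, Jensen) on $\mathfrak{S}$ to pass from $\|F\|_{L^p(0,\infty)}$, $\|G\|_{L^q(0,\infty)}$ back to $\|f\|_{L^p(\mathbb{G})}$, $\|g\|_{L^q(\mathbb{G})}$; the constant bookkeeping ($|\mathfrak{S}|^{1/q+1/p}$ cancelling the $1/|\mathfrak{S}|$) is correct, and your sharpness argument via radial near-extremizers is the same as, and in fact more explicit than, the paper's.

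The one step that fails as written is the duality substitution. Writing $A(y)=\int_\mathbb{G}\frac{f(x)}{|x|+|y|}\left(\frac{|x|}{|B(0,|x|)|}\right)^{1/q}\mathrm{d}x$ and $w(y)=\frac{|y|}{|B(0,|y|)|}$, your choice $g(y)=A(y)^{p-1}w(y)^{1/p}$ turns the left-hand side of \eqref{H.H.Eq.7} into $\int_\mathbb{G} A(y)^p\,w(y)^{2/p}\,\mathrm{d}y$, while $\int_\mathbb{G} g^q(y)\,\mathrm{d}y=\int_\mathbb{G} A(y)^p\,w(y)^{q/p}\,\mathrm{d}y$; these two integrals are different (unless $p=2$), so you cannot divide through, and neither of them is the integral appearing in \eqref{H.H.Eq.11}. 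The weight exponent in the test function must be $1/q$, as in the paper: then the weight powers on the left add up to $w^{1/p+1/q}=w$, the identity $(p-1)q=p$ makes $\int g^q$ equal to the same integral $\int A^p w$, and dividing (justified by your truncation remark) gives \eqref{H.H.Eq.11} with the same constant. With that one-exponent correction, your equivalence argument, including the H\"older direction and the transfer of sharpness, coincides with the paper's.
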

\begin{proof}
By using polar decomposition \eqref{H.I.Eq.3} and \eqref{H.I.Eq.4} on a homogeneous group $\mathbb{G}$, we have
\begin{align*}
I:&=\int_\mathbb{G}\int_\mathbb{G} \dfrac{f(x)g(y)}{|x|+|y|} \left(\dfrac{|x|}{|B(0,|x|)|}\right)^{1/q}\left(\dfrac{|y|}{|B(0,|y|)|}\right)^{1/p} \mathrm{d}x\mathrm{d}y\\
&=\dfrac{Q}{|\mathfrak{S}|}\int_0^\infty\int_0^\infty \dfrac{F(r)G(s)}{r+s}\mathrm{d}r\mathrm{d}s,
\end{align*}
where $F(r)$ and $G(s)$ are defined by \eqref{H.H.Eq.7.1}. Then, by Hardy-Hilbert's inequality \eqref{H.H.Eq.2}, Jensen's inequality and again \eqref{H.I.Eq.3}, we obtain that
\begin{align*}
I\leq \dfrac{Q}{|\mathfrak{S}|} \dfrac{\pi}{\sin \left(\frac{\pi}{p}\right)} &\left(\int_0^\infty F^p(r)\,\mathrm{d}r\right)^{1/p}\left(\int_0^\infty G^{q}(s)\,\mathrm{d}s\right)^{1/q}\\
=Q|\mathfrak{S}| \dfrac{\pi}{\sin \left(\frac{\pi}{p}\right)}& \left(\int_0^\infty r^{Q-1} \left(\dfrac{1}{|\mathfrak{S}|}\int_\mathfrak{S} f(rx)\mathrm{d}\sigma(x)\right)^p\,\mathrm{d}r\right)^{1/p}\times\\
&\left(\int_0^\infty s^{Q-1}\left(\dfrac{1}{|\mathfrak{S}|}\int_\mathfrak{S} g(sy)\mathrm{d}\sigma(y)\right)^{q}\,\mathrm{d}s\right)^{1/q}\\
\leq Q|\mathfrak{S}| \dfrac{\pi}{\sin \left(\frac{\pi}{p}\right)}& \left(\int_0^\infty r^{Q-1} \left(\dfrac{1}{|\mathfrak{S}|}\int_\mathfrak{S} f^p(rx)\mathrm{d}\sigma(x)\right)\,\mathrm{d}r\right)^{1/p}\times\\
&\left(\int_0^\infty s^{Q-1}\left(\dfrac{1}{|\mathfrak{S}|}\int_\mathfrak{S} g^{q}(sy)\mathrm{d}\sigma(y)\right)\,\mathrm{d}s\right)^{1/q}\\
=\dfrac{Q\pi}{\sin \left(\frac{\pi}{p}\right)}&\left(\int_\mathbb{G} f^p(x) \mathrm{d}x\right)^{1/p}\left(\int_\mathbb{G} g^{q}(y) \mathrm{d}y\right)^{1/q}.
\end{align*}
Therefore, \eqref{H.H.Eq.7} holds and the sharpness of the constant $\dfrac{Q \pi}{\sin\left(\pi/p\right)}$ in \eqref{H.H.Eq.7} is guaranteed from the sharpness of the constant in \eqref{H.H.Eq.2} and Jensen's inequality.

Next we assume that \eqref{H.H.Eq.7} holds and use it with 
\begin{align*}
g(y)=\left(\int_\mathbb{G} \dfrac{f(x)}{|x|+|y|} \left(\dfrac{|x|}{|B(0,|x|)|}\right)^{1/q}\mathrm{d}x\right)^{p-1} \left(\dfrac{|y|}{|B(0,|y|)|}\right)^{1/q}.
\end{align*}
Then we find that
\begin{align*}
&\int_\mathbb{G}\left(\int_\mathbb{G} \dfrac{f(x)}{|x|+|y|} \left(\dfrac{|x|}{|B(0,|x|)|}\right)^{1/q} \mathrm{d}x\right)^p\dfrac{|y|}{|B(0,|y|)|} \mathrm{d}y\\
&=\int_\mathbb{G}\int_\mathbb{G} \dfrac{f(x)g(y)}{|x|+|y|} \left(\dfrac{|x|}{|B(0,|x|)|}\right)^{1/q}\left(\dfrac{|y|}{|B(0,|y|)|}\right)^{1/p} \mathrm{d}x\mathrm{d}y\\
&\leq \dfrac{Q \pi}{\sin \left(\frac{\pi}{p}\right)} \left(\int_\mathbb{G} f^p(x) \mathrm{d}x\right)^{1/p}\left(\int_\mathbb{G} g^{q}(y) \mathrm{d}y\right)^{1/q}\\
&= \dfrac{Q \pi}{\sin \left(\frac{\pi}{p}\right)} \left(\int_\mathbb{G} f^p(x) \mathrm{d}x\right)^{1/p}\left[\int_\mathbb{G}\left(\int_\mathbb{G} \dfrac{f(x)}{|x|+|y|} \left(\dfrac{|x|}{|B(0,|x|)|}\right)^{1/q}\mathrm{d}x\right)^p\dfrac{|y|}{|B(0,|y|)|}\mathrm{d}y\right]^{1/q},
\end{align*}
and since $1-1/q=1/p$ we conclude that \eqref{H.H.Eq.11} holds.

 Conversely, we assume that \eqref{H.H.Eq.11} holds. Then, by H$\ddot{\text{o}}$lder's inequality,
\begin{align*}
&\int_\mathbb{G}\int_\mathbb{G} \dfrac{f(x)g(y)}{|x|+|y|} \left(\dfrac{|x|}{|B(0,|x|)|}\right)^{1/q}\left(\dfrac{|y|}{|B(0,|y|)|}\right)^{1/p}\mathrm{d}x\mathrm{d}y\\
&= \int_\mathbb{G} g(y)\left(\left(\dfrac{|y|}{|B(0,|y|)|}\right)^{1/p}\int_\mathbb{G} \dfrac{f(x)}{|x|+|y|} \left(\dfrac{|x|}{|B(0,|x|)|}\right)^{1/q}\mathrm{d}x\right)\mathrm{d}y\\
&\leq \left(\int_\mathbb{G} g^{q}(y)\,\mathrm{d}y\right)^{1/q}\left[\int_\mathbb{G}\left(\int_\mathbb{G} \dfrac{f(x)}{|x|+|y|} \left(\dfrac{|x|}{|B(0,|x|)|}\right)^{1/q}\mathrm{d}x\right)^p\dfrac{|y|}{|B(0,|y|)|}\,\mathrm{d}y\right]^{1/p}\\
&\leq \dfrac{Q \pi}{\sin \left(\frac{\pi}{p}\right)} \left(\int_\mathbb{G} f^p(x) \mathrm{d}x\right)^\frac{1}{p}\left(\int_\mathbb{G} g^{q}(y)\,\mathrm{d}y\right)^{1/q},
\end{align*}
so \eqref{H.H.Eq.7} holds. The proof of the equivalence between \eqref{H.H.Eq.7} and \eqref{H.H.Eq.112} can be done analogously. But, simply, the equivalence between the inequalities \eqref{H.H.Eq.11} and \eqref{H.H.Eq.112} follows by changing notations and use the symmetry (e.g. that $\frac{1}{p}+\frac{1}{q}=1$ so $\sin(\frac{\pi}{q})=\sin(\frac{\pi}{p})$). The proof is complete. 
\end{proof}
\begin{corollary}\label{F.H.H.Cor3.3}
Let $p>1, q=p/(p-1)$ and let $f$ and $g$ be nonnegative and measurable functions on $\mathbb{R}^n, n\in \mathbb{Z}_+$, such that $f\in L^p(\mathbb{R}^n)$ and $g\in L^q(\mathbb{R}^n)$. Then, the following Hardy-Hilbert type inequality holds:
\begin{align}\label{F.H.H.Cor.3.3.Equ.3.5}
\begin{split}
\int_{\mathbb{R}^n}\int_{\mathbb{R}^n} \dfrac{f(x)g(y)}{|x|+|y|}|x|^{(1-n)\frac{1}{q}}|y|^{(1-n)\frac{1}{p}}\,\mathrm{d}x\mathrm{d}y\\
\leq \dfrac{n\pi}{\sin \frac{\pi}{p}}\dfrac{\pi^{n/2}}{\Gamma(\frac{n}{2}+1)}\left(\int_{\mathbb{R}^n} f^p(x)\,\mathrm{d}x\right)^{1/p}\left(\int_{\mathbb{R}^n} g^q(y)\,\mathrm{d}y\right)^{1/q}.
\end{split}
\end{align}
Moreover, the following integral inequalities hold and are equivalent to \eqref{F.H.H.Cor.3.3.Equ.3.5}:
\begin{align}\label{F.H.H.Cor.3.3.Equ.3.6}
\left(\int_{\mathbb{R}^n}\left(\int_{\mathbb{R}^n} \dfrac{f(x)}{|x|+|y|}|x|^{(1-n)\frac{1}{q}}\,\mathrm{d}x\right)^p |y|^{1-n}\mathrm{d}y\right)^{1/p}\leq \dfrac{n\pi}{\sin \frac{\pi}{p}}\dfrac{\pi^{n/2}}{\Gamma(\frac{n}{2}+1)}\left(\int_{\mathbb{R}^n} f^p(x)\,\mathrm{d}x\right)^{1/p}
\end{align}
and
\begin{align}\label{F.H.H.Cor.3.3.Equ.3.7}
\left(\int_{\mathbb{R}^n}\left(\int_{\mathbb{R}^n} \dfrac{g(y)}{|x|+|y|}|y|^{(1-n)\frac{1}{p}}\,\mathrm{d}y\right)^q |x|^{1-n}\mathrm{d}x\right)^{1/q}\leq \dfrac{n\pi}{\sin \frac{\pi}{p}}\dfrac{\pi^{n/2}}{\Gamma(\frac{n}{2}+1)}\left(\int_{\mathbb{R}^n} g^q(y)\,\mathrm{d}y\right)^{1/q}.
\end{align}
The constant $\dfrac{n\pi}{\sin \frac{\pi}{p}}\dfrac{\pi^{n/2}}{\Gamma(\frac{n}{2}+1)}$ is sharp in all three inequalities \eqref{F.H.H.Cor.3.3.Equ.3.5} -- \eqref{F.H.H.Cor.3.3.Equ.3.7}. 
\end{corollary}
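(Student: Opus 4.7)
The plan is to obtain Corollary \ref{F.H.H.Cor3.3} as a direct specialization of Theorem \ref{H.I.Th.4} to the Euclidean case $\mathbb{G} = \mathbb{R}^n$ equipped with the standard Euclidean norm, for which the homogeneous dimension is $Q = n$. All three Euclidean inequalities, as well as their equivalences and the sharpness of the constant, should then follow by a direct rearrangement of the three group-valued inequalities together with the well-known formula for the volume of the Euclidean ball.

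First I would rewrite in explicit form each of the weight factors of the shape $\left(\dfrac{|x|}{|B(0,|x|)|}\right)^{1/q}$ appearing in Theorem \ref{H.I.Th.4}. Since $|B(0,r)| = c_n r^{n}$ with $c_n := \dfrac{\pi^{n/2}}{\Gamma(n/2+1)}$, these factors reduce to $c_n^{-1/q} |x|^{(1-n)/q}$ and $c_n^{-1/p} |y|^{(1-n)/p}$, respectively. Substituting these into \eqref{H.H.Eq.7} the left-hand side acquires the overall constant $c_n^{-1/p - 1/q} = c_n^{-1}$ in front of the integral whose weight is now the pure power $|x|^{(1-n)/q}|y|^{(1-n)/p}$. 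Multiplying both sides of the resulting inequality by $c_n$ and using $Q=n$ turns the constant $\frac{Q\pi}{\sin(\pi/p)}$ into $\frac{n\pi}{\sin(\pi/p)} \cdot \frac{\pi^{n/2}}{\Gamma(n/2+1)}$, which is exactly \eqref{F.H.H.Cor.3.3.Equ.3.5}.

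The analogous rearrangement of \eqref{H.H.Eq.11} and \eqref{H.H.Eq.112} proceeds in the same way: inside each outer integral there is one factor $\left(\dfrac{|x|}{|B(0,|x|)|}\right)^{1/q}$ raised to the power $p$ and one unexponentiated factor $\dfrac{|y|}{|B(0,|y|)|}$; these contribute powers of $c_n^{-1}$ with exponents $p/q$ and $1$ that, after the final root of order $1/p$ is applied, recombine to a single factor of $c_n^{-1}$ on the left. Moving it to the right reproduces \eqref{F.H.H.Cor.3.3.Equ.3.6} (and symmetrically \eqref{F.H.H.Cor.3.3.Equ.3.7}). Since each of the three Euclidean inequalities differs from its homogeneous-group parent only by multiplication by the positive constant $c_n$, the equivalences among \eqref{F.H.H.Cor.3.3.Equ.3.5}--\eqref{F.H.H.Cor.3.3.Equ.3.7} and the sharpness of the constant $\frac{n\pi}{\sin(\pi/p)} \cdot \frac{\pi^{n/2}}{\Gamma(n/2+1)}$ transfer directly from the corresponding statements in Theorem \ref{H.I.Th.4}.

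The step that requires care, though not deep, is the bookkeeping: one must verify that the fractional exponents of $c_n$ that appear after substitution indeed collapse to the single factor $c_n^{-1}$ in each of the three inequalities. This is ultimately a consequence of the conjugacy $\tfrac{1}{p} + \tfrac{1}{q} = 1$ and presents the only real obstacle to a clean derivation.
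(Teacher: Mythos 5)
Your proposal is correct and takes essentially the same route as the paper: specialize Theorem \ref{H.I.Th.4} to $\mathbb{G}=\mathbb{R}^n$ (so $Q=n$) with $|B(0,|x|)|=\frac{\pi^{n/2}}{\Gamma(\frac{n}{2}+1)}|x|^n$ and carry out the standard constant calculations. Your explicit bookkeeping, showing that the powers of $c_n$ collapse to a single factor $c_n^{-1}$ via $\frac{1}{p}+\frac{1}{q}=1$, is exactly the ``standard calculation'' the paper leaves implicit.
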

$\Gamma=\Gamma(x)$ denotes the usual Gamma function in Corollary \ref{F.H.H.Cor3.3}.
\begin{proof}
Use Theorem \ref{H.I.Th.4} for the case when $G=\mathbb{R}^n$ so that $Q=n$ and $B(0, |x|)$ stands for the volume of a ball with radius $|x|$. Therefore, it is known that in this case 
\begin{align*}
B(0,|x|)=\dfrac{\pi^{n/2}}{\Gamma(\frac{n}{2}+1)}|x|^n.
\end{align*}
So the calculation of the sharp constant only consist of some standard calculations.
\end{proof}
\begin{remark}
Note that the ``kernel'' 
\begin{align*}
\dfrac{|x|^{(1-n)\frac{1}{q}}|y|^{(1-n)\frac{1}{p}}}{|x|+|y|}
\end{align*}
in \eqref{F.H.H.Cor.3.3.Equ.3.5} is homogeneous of degree $-n$ so it also follows from the general theory that \eqref{F.H.H.Cor.3.3.Equ.3.5} holds with some constant but with our technique we also get the sharp constant $\dfrac{n\pi}{\sin \frac{\pi}{p}}\dfrac{\pi^{n/2}}{\Gamma(\frac{n}{2}+1)}$. The equivalence of \eqref{F.H.H.Cor.3.3.Equ.3.5} -- \eqref{F.H.H.Cor.3.3.Equ.3.7} follows also from standard arguments we explain more in detail in next two sections. See also the books \cite{KS} and \cite{KPS}.
\end{remark}
By using the technique above in the proof of \eqref{H.H.Eq.7} we can make a similar generalization of several Hardy-Hilbert type inequalities in the literature with other kernels $k(x,y)$ than the classical Hilbert kernel $k(x,y)=1/(x+y)$. As one example with the kernel $1/(x^\lambda+y^\lambda)$ in \eqref{F.H.Th.1.Eq.4} we can generalize Theorem C as follows:
\begin{proposition}\label{H.H.Th.3.5}
Let $\mathbb{G}$ be a homogeneous group with the homogeneous dimension $Q$ equipped with a quasi-norm $|\cdot|$. If $p>1, k>1, \frac{1}{p}+\frac{1}{q}=\frac{1}{k}+\frac{1}{m}=1$, and $\lambda>0$, $f$ and $g$ are nonnegative functions such that $0< \int_0^\infty r^{p(1-\frac{\lambda}{k})-1}F^p(r)\mathrm{d}r<\infty$ and $0< \int_0^\infty s^{q(1-\frac{\lambda}{m})-1}G^q(s)\mathrm{d}s<\infty$, then the inequality
\begin{align}\label{F.H.H.P.1.Eq.24}
\begin{split}
\int_\mathbb{G}\int_\mathbb{G} \dfrac{f(x)g(y)}{|x|^\lambda+|y|^\lambda} \left(\dfrac{|x|}{|B(0,|x|)|}\right)^{1/q}\left(\dfrac{|y|}{|B(0,|y|)|}\right)^{1/p} \mathrm{d}x\mathrm{d}y\\
\leq\dfrac{Q \pi}{\lambda\sin \left(\frac{\pi}{m}\right)} \left(\int_\mathbb{G} |x|^{p(1-\frac{\lambda}{k})-1}f^p(x) \mathrm{d}x\right)^{1/p}\left(\int_\mathbb{G} |y|^{q(1-\frac{\lambda}{m})-1}g^{q}(y) \mathrm{d}y\right)^{1/q},
\end{split}
\end{align}
holds, where $F(r)$ and $G(s)$ are defined by \eqref{H.H.Eq.7.1}.
Moreover, the constant factor $\dfrac{Q \pi}{\lambda\sin\left(\pi/k\right)}$ in \eqref{F.H.H.P.1.Eq.24} is the best possible. 
\end{proposition}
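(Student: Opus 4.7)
The plan is to mirror almost verbatim the proof of Theorem~\ref{H.I.Th.4}, replacing the classical Hardy--Hilbert inequality \eqref{H.H.Eq.2} with Theorem~C at the key step. The structure of the argument is dictated by the fact that the left-hand side of \eqref{F.H.H.P.1.Eq.24} differs from that of \eqref{H.H.Eq.7} only in replacing the kernel $1/(|x|+|y|)$ by $1/(|x|^\lambda+|y|^\lambda)$, and that the polar decomposition \eqref{H.I.Eq.3} together with the volume formula \eqref{H.I.Eq.4} converts this group integral into a one-dimensional integral with exactly the kernel $1/(r^\lambda+s^\lambda)$ appearing in Theorem~C.

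First I would use the identity $|B(0,|x|)|=\frac{|\mathfrak{S}|}{Q}|x|^Q$ from \eqref{H.I.Eq.4} together with \eqref{H.I.Eq.3} to rewrite the left-hand side of \eqref{F.H.H.P.1.Eq.24}. A short calculation, precisely as in the opening step of the proof of Theorem~\ref{H.I.Th.4}, shows that the radial weights $|x|^{(1-Q)/q}|y|^{(1-Q)/p}$ produced by the factors $(|x|/|B(0,|x|)|)^{1/q}(|y|/|B(0,|y|)|)^{1/p}$ combine with the Jacobian $r^{Q-1}s^{Q-1}$ from polar decomposition to give exactly $r^{(Q-1)/p}s^{(Q-1)/q}$, so that the left-hand side collapses to
\begin{equation*}
\dfrac{Q}{|\mathfrak{S}|}\int_0^\infty\int_0^\infty \dfrac{F(r)G(s)}{r^\lambda+s^\lambda}\,\mathrm{d}r\,\mathrm{d}s,
\end{equation*}
with $F$ and $G$ as defined in \eqref{H.H.Eq.7.1}.

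Next I would apply Theorem~C to this double integral, yielding an upper bound by $\frac{\pi}{\lambda\sin(\pi/m)}$ times $\left(\int_0^\infty r^{p(1-\lambda/k)-1}F^p(r)\mathrm{d}r\right)^{1/p}\left(\int_0^\infty s^{q(1-\lambda/m)-1}G^q(s)\mathrm{d}s\right)^{1/q}$. To re-express these one-dimensional norms in terms of $f$ and $g$ on $\mathbb{G}$, I would apply Jensen's inequality to the probability measure $|\mathfrak{S}|^{-1}\,\mathrm{d}\sigma$ on $\mathfrak{S}$, exactly as in the proof of Theorem~\ref{H.I.Th.4}, to move the $p$-th and $q$-th powers inside the spherical averages. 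Another application of \eqref{H.I.Eq.3} (noting that for $y=rx$ with $x\in\mathfrak{S}$ one has $|y|=r$) then recognises the resulting quantities as $\int_\mathbb{G}|x|^{p(1-\lambda/k)-1}f^p(x)\mathrm{d}x$ and $\int_\mathbb{G}|y|^{q(1-\lambda/m)-1}g^q(y)\mathrm{d}y$, respectively. The factors $|\mathfrak{S}|^{1/q}$ and $|\mathfrak{S}|^{1/p}$ produced by Jensen combine to a single factor $|\mathfrak{S}|$ that cancels the $1/|\mathfrak{S}|$ from the opening reduction, leaving precisely the constant $\frac{Q\pi}{\lambda\sin(\pi/m)}$ as claimed.

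The sharpness of the constant is inherited from the sharpness in Theorem~C: applying the chain of inequalities to radial functions $f(x)=\tilde f(|x|)$, $g(y)=\tilde g(|y|)$ reduces \eqref{F.H.H.P.1.Eq.24} exactly to \eqref{F.H.Th.1.Eq.4} with a purely multiplicative constant between the two sides, so any improvement of the group constant would force an improvement of the sharp classical constant, which is impossible. I do not expect any serious obstacle: the only genuinely non-routine point is the bookkeeping of exponents in the polar reduction (checking that $(Q-1)(1-1/q)=(Q-1)/p$ etc.\ make all the weights land correctly), which is the exact analogue of what was verified in the proof of Theorem~\ref{H.I.Th.4} and is essentially forced by the fact that $1/p+1/q=1$.
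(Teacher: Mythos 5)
Your proposal is correct and is essentially the proof the paper intends: the paper explicitly says Proposition \ref{H.H.Th.3.5} is proved analogously to Theorem \ref{H.I.Th.4} (polar decomposition together with \eqref{H.I.Eq.4} to reduce the group integral to the one-dimensional kernel $1/(r^\lambda+s^\lambda)$, then Theorem C followed by Jensen's inequality on $\mathfrak{S}$ and a final application of \eqref{H.I.Eq.3}), which is exactly your argument, and your exponent bookkeeping and the cancellation of the $|\mathfrak{S}|^{1/p}|\mathfrak{S}|^{1/q}$ factors against $Q/|\mathfrak{S}|$ check out. Your sharpness argument via radial functions (for which Jensen becomes equality, so the group inequality collapses to \eqref{F.H.Th.1.Eq.4} up to the factor $Q$) is in fact more explicit than the paper's brief remark that sharpness is inherited from the classical constant; the paper itself chooses not to write out these details, preferring instead to subsume the proposition under the general kernel framework of Sections \ref{Sec.4}--\ref{Sec.5} (Theorem \ref{G.H.K.Thm.4.6}).
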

The proof of Proposition \ref{H.H.Th.3.5} is analogous to that of Theorem \ref{H.I.Th.4} but guided by the theory in the classical case (see Section \ref{Sec.4}), we choose another strategy namely to directly generalize Hardy-Hilbert theory to the general kernel case so for example Proposition \ref{H.H.Th.3.5} (and other similar generalizations of classical results) are just special cases of our general result. In the next section we present the motivation for this strategy.

\section{On the general kernel case: the classical situation}\label{Sec.4}
By a kernel we mean a positive and measurable function $k(x,y)$ on $\mathbb{R}_+\times\mathbb{R}_+$, we say that the kernel is homogeneous of order $\lambda$ if 
\begin{align*}
k(ax, ay)=a^\lambda k(x,y),\quad x,y\in \mathbb{R}_+, a>0.
\end{align*}
The kernels of order $-1$ are of particular importance for the general theory. For such kernels we define the constant
\begin{align}\label{H.K.C.Equ.4.2}
C^\ast_p=\int_0^\infty k(1,y)y^{-\frac{1}{p}}\,\mathrm{d}y=\int_0^\infty k(x,1)x^{-\frac{1}{q}}\,\mathrm{d}x,
\end{align}
where $p>1, \dfrac{1}{p}+\dfrac{1}{q}=1.$

The following Theorem is known (see \cite[Theorem 7.48]{KPS}):
\begin{theorem}\label{H.K.Thm.4.1}
Let the kernel $k(x,y)$ satisfy \eqref{H.K.C.Equ.4.2} and $p>1$. Then the inequality 
\begin{align}\label{H.K.Equ.4.3}
\int_0^\infty\left(\int_0^\infty k(x,y)f(y)\,\mathrm{d}y\right)^p\mathrm{d}x\leq C\int_0^\infty f^p(x)\,\mathrm{d}x
\end{align}
holds for all positive and measurable functions on $\mathbb{R}_+$ if and only if $C_p^\ast <\infty$. Moreover, the sharp constant in \eqref{H.K.Equ.4.3} is $C=(C_p^\ast)^p$.
\end{theorem}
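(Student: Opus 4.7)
The plan is to prove both directions of the equivalence together by establishing that $(C_p^\ast)^p$ is simultaneously an upper bound when $C_p^\ast < \infty$ and the sharp constant. The key tool is a weighted H\"older splitting combined with the scaling properties of a homogeneous kernel of order $-1$.

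First, assuming $C_p^\ast < \infty$, I would write
\begin{equation*}
k(x,y)f(y) = \Bigl[k(x,y)^{1/q}\bigl(x/y\bigr)^{1/(pq)}\Bigr]\cdot\Bigl[k(x,y)^{1/p}\bigl(y/x\bigr)^{1/(pq)}f(y)\Bigr]
\end{equation*}
and apply H\"older's inequality with exponents $q$ and $p$ to the inner integral. The first factor, after raising to the $q$th power and using the substitution $y = xt$ together with the homogeneity $k(x, xt) = x^{-1}k(1,t)$, gives precisely $C_p^\ast$ (independently of $x$). Thus the $p$th power of $\int k(x,y)f(y)\,\mathrm{d}y$ is bounded by $(C_p^\ast)^{p/q}$ times $\int k(x,y)(y/x)^{1/q}f(y)^p\,\mathrm{d}y$.

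Next I would integrate in $x$, apply Fubini, and compute the remaining inner integral $\int_0^\infty k(x,y)(y/x)^{1/q}\,\mathrm{d}x$ by the symmetric substitution $x = ys$, which again by the homogeneity reduces to the second representation of $C_p^\ast$ in \eqref{H.K.C.Equ.4.2}. Combining, the total bound becomes $(C_p^\ast)^{p/q}\cdot C_p^\ast\cdot \int_0^\infty f^p(x)\,\mathrm{d}x = (C_p^\ast)^p\int_0^\infty f^p(x)\,\mathrm{d}x$, which gives \eqref{H.K.Equ.4.3} with $C = (C_p^\ast)^p$.

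For the converse direction and sharpness, I would use a standard truncated power test function, e.g.\ $f_\varepsilon(y) = y^{-1/p - \varepsilon}\chi_{[1,\infty)}(y)$ for small $\varepsilon > 0$. The right-hand side of \eqref{H.K.Equ.4.3} is then $\|f_\varepsilon\|_p^p = 1/(p\varepsilon)$. For the left-hand side, after the change of variables $y = xt$ one isolates the factor $x^{-1-p\varepsilon}$ times a $t$-integral that, as $\varepsilon \to 0^+$, approaches $C_p^\ast$ by monotone convergence (using the homogeneity to pass the $\varepsilon$ correction inside and a small domain correction near $t=1/x$ that is dominated and vanishes). Integrating over $x \in [1,\infty)$ produces another factor $1/(p\varepsilon)$, so the ratio of the two sides tends to $(C_p^\ast)^p$, forcing any admissible constant $C$ to satisfy $C \geq (C_p^\ast)^p$. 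In particular, if \eqref{H.K.Equ.4.3} holds with some finite $C$, then $C_p^\ast$ must be finite, and the sharp constant is exactly $(C_p^\ast)^p$.

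The main obstacle is the sharpness argument: the test function $f_\varepsilon$ does not match the Hilbert-type extremizer perfectly because of the truncation, so one must carefully estimate the boundary contribution near $x\sim 1$ and the tail correction arising from integrating $t$ over $[1/x,\infty)$ instead of $[0,\infty)$, and show that both remainders are $o(1/\varepsilon)$ so that only the main term $(C_p^\ast)^{p}/(p\varepsilon)$ survives. Everything else is a routine bookkeeping of exponents using the homogeneity relation $k(ax,ay) = a^{-1}k(x,y)$.
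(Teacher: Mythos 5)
Your argument is correct, and it is essentially the standard route. Note that the paper does not actually prove Theorem \ref{H.K.Thm.4.1} itself (it refers to \cite[Theorem 7.48]{KPS}, \cite[Theorem 319]{HLP} and \cite{LPS2}); its own written proof is of the homogeneous-group analogue, Theorem \ref{G.H.K.Thm.4.1}, and your proof runs parallel to that one. For sufficiency you use the H\"older splitting
$k(x,y)f(y)=\bigl[k^{1/q}(x/y)^{1/(pq)}\bigr]\cdot\bigl[k^{1/p}(y/x)^{1/(pq)}f(y)\bigr]$
followed by Fubini and two substitutions $y=xt$, $x=ys$ reproducing the two representations of $C_p^\ast$ in \eqref{H.K.C.Equ.4.2}; the paper instead normalizes $\frac{1}{C_p^\ast}k(1,t)t^{-1/p}\,\mathrm{d}t$ into a probability measure and applies Jensen's inequality plus Fubini. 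These are the same convexity argument in two dressings and yield the same constant $(C_p^\ast)^p$. For necessity both you and the paper use the truncated power $f_\varepsilon(y)=y^{-1/p-\varepsilon}\chi_{(1,\infty)}(y)$ (the paper additionally inserts a dual function via sharpness of H\"older, which you bypass by estimating the $L^p$ norm directly). One small caution on your remainder analysis: in the necessity direction you cannot invoke dominated convergence, since finiteness of $C_p^\ast$ is precisely what is to be proved; the clean bookkeeping is to restrict to $x\geq X$ for a fixed large $X$, bound the inner integral below by $\int_{1/X}^\infty k(1,t)t^{-1/p-\varepsilon}\,\mathrm{d}t$, obtain $C\geq X^{-p\varepsilon}\bigl(\int_{1/X}^\infty k(1,t)t^{-1/p-\varepsilon}\,\mathrm{d}t\bigr)^p$, and then let $\varepsilon\to 0^+$ by Fatou and $X\to\infty$ by monotone convergence — exactly the Fatou step the paper uses in \eqref{G.H.K.Equ.4.11}. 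With that adjustment your sketch closes completely.
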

\begin{remark}
A simple proof of Theorem \ref{H.K.Thm.4.1} can be found in \cite{LPS2}, but the result was known before, see \cite[Theorem 319]{HLP} and even in multidimensional cases in the works of N. Karapetiants and S. Samko (see their book \cite{KS} and the references therein). 
\end{remark}
\begin{example}
In Theorem C we replace $x^{p(1-\frac{\lambda}{k})-1}f^p(x)$ by $f^p(x)$ and $y^{q(1-\frac{\lambda}{m})-1}g^q(y)$ by $g^q(y)$ and we find that \eqref{F.H.Th.1.Eq.4} can equivalently be written as 
\begin{align*}
\begin{split}
&\int_0^\infty\int_0^\infty \dfrac{x^{-1+\frac{\lambda}{k}+\frac{1}{p}} y^{-1+\frac{\lambda}{m}+\frac{1}{q}}}{x^\lambda+y^\lambda}f(x)g(y)\, \mathrm{d}x\mathrm{d}y\\
&\leq \dfrac{\pi}{\lambda\sin \left(\frac{\pi}{m}\right)}\left(\int_0^\infty f^p(x)\, \mathrm{d}x\right)^{1/p}\left(\int_0^\infty g^q(y) \mathrm{d}y\right)^{1/q}.
\end{split}
\end{align*}
\end{example}
Hence, since the kernel $k(x,y)= \dfrac{x^{-1+\frac{\lambda}{k}+\frac{1}{p}} y^{-1+\frac{\lambda}{m}+\frac{1}{q}}}{x^\lambda+y^\lambda}$ is obviously homogeneous of degree $-1$ we can conclude that Theorem C is a simple special case of Theorem \ref{H.K.Thm.4.1}.\\

Another important remark is:
\begin{remark}\label{H.Rem.4.4}
By using a standard dilation argument we see that among homogeneous kernels $k(x,y)$ \eqref{H.K.Equ.4.3} can never hold for any $\lambda\neq -1$ so the condition $\lambda=-1$ in Theorem \ref{H.K.Thm.4.1} is also necessary. See \cite[Remark 7.68]{KPS}.
\end{remark}
Another useful equivalence result holds for all kernels.

\begin{lemma}
Let $p>1, \dfrac{1}{p}+\dfrac{1}{q}=1$, $C>0$ and $k(x,y)$ be a general kernel. Then the following inequalities are equivalent:
\begin{align}
\int_0^\infty\int_0^\infty k(x,y)f(x)g(y)\mathrm{d}x\mathrm{d}y&\leq C\left(\int_0^\infty f^p(x)\mathrm{d}x\right)^\frac{1}{p}\left(\int_0^\infty g^q(y)\mathrm{d}y\right)^\frac{1}{q}\label{H.K.Lem4.5.Equ.4.4}\\
\int_0^\infty\left(\int_0^\infty k(x,y)f(x)\mathrm{d}x\right)^p\mathrm{d}y&\leq C^p\int_0^\infty f^p(x)\mathrm{d}x\label{H.K.Lem4.5.Equ.4.5}\\
\int_0^\infty\left(\int_0^\infty k(x,y)g(y)\mathrm{d}y\right)^q\mathrm{d}x&\leq C^q\int_0^\infty g^q(y)\mathrm{d}y.\label{H.K.Lem4.5.Equ.4.6}
\end{align}
\end{lemma}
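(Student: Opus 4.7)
The plan is to establish \eqref{H.K.Lem4.5.Equ.4.4} $\Leftrightarrow$ \eqref{H.K.Lem4.5.Equ.4.5} and \eqref{H.K.Lem4.5.Equ.4.4} $\Leftrightarrow$ \eqref{H.K.Lem4.5.Equ.4.6} by the same classical duality trick already used in the proof of Theorem~\ref{H.I.Th.4}: a judicious choice of test function for one direction, and H\"older's inequality for the other. Since the roles of $(p,f)$ and $(q,g)$ are symmetric in \eqref{H.K.Lem4.5.Equ.4.4}, proving the equivalence between \eqref{H.K.Lem4.5.Equ.4.4} and \eqref{H.K.Lem4.5.Equ.4.5} will automatically yield the equivalence between \eqref{H.K.Lem4.5.Equ.4.4} and \eqref{H.K.Lem4.5.Equ.4.6} by swapping $p\leftrightarrow q$ and $f\leftrightarrow g$.

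For the direction \eqref{H.K.Lem4.5.Equ.4.4} $\Rightarrow$ \eqref{H.K.Lem4.5.Equ.4.5}, I would substitute the test function
\[
g(y) := \left(\int_0^\infty k(x,y)f(x)\,\mathrm{d}x\right)^{p-1}
\]
into \eqref{H.K.Lem4.5.Equ.4.4}. The left side then equals $\int_0^\infty\left(\int_0^\infty k(x,y)f(x)\,\mathrm{d}x\right)^p \mathrm{d}y$, while the factor $\left(\int_0^\infty g^q(y)\,\mathrm{d}y\right)^{1/q}$ on the right equals $\left(\int_0^\infty\left(\int_0^\infty k(x,y)f(x)\,\mathrm{d}x\right)^p\mathrm{d}y\right)^{1/q}$, using that $q(p-1)=p$. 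Dividing this factor across and invoking $1-1/q=1/p$, then raising the resulting inequality to the $p$-th power, delivers \eqref{H.K.Lem4.5.Equ.4.5}.

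Conversely, assuming \eqref{H.K.Lem4.5.Equ.4.5}, I would rewrite the double integral in \eqref{H.K.Lem4.5.Equ.4.4} as an iterated integral and apply H\"older's inequality in the variable $y$ with exponents $q$ and $p$:
\begin{align*}
\int_0^\infty\!\!\int_0^\infty k(x,y)f(x)g(y)\,\mathrm{d}x\mathrm{d}y &= \int_0^\infty g(y)\left(\int_0^\infty k(x,y)f(x)\,\mathrm{d}x\right)\mathrm{d}y\\
&\leq \left(\int_0^\infty\! g^q(y)\,\mathrm{d}y\right)^{\!1/q}\!\left(\int_0^\infty\!\left(\int_0^\infty\! k(x,y)f(x)\,\mathrm{d}x\right)^{\!p}\!\mathrm{d}y\right)^{\!1/p}.
\end{align*}
By \eqref{H.K.Lem4.5.Equ.4.5} the last factor is bounded by $C\left(\int_0^\infty f^p(x)\,\mathrm{d}x\right)^{1/p}$, producing \eqref{H.K.Lem4.5.Equ.4.4} with the same constant $C$.

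The only real obstacle is a mild technicality in the first direction: the division step requires $0<\int_0^\infty\left(\int_0^\infty k(x,y)f(x)\,\mathrm{d}x\right)^p\mathrm{d}y<\infty$, since otherwise the substituted $g$ has infinite $L^q$-norm and \eqref{H.K.Lem4.5.Equ.4.4} is vacuous or trivial. The standard remedy is to truncate $f$ together with the domain of integration, apply the argument in the finite regime, and pass to the limit by monotone convergence. Apart from this bookkeeping, the whole equivalence is purely algebraic, driven by the identities $q(p-1)=p$ and $\tfrac{1}{p}+\tfrac{1}{q}=1$, which make the exponents close neatly in both directions.
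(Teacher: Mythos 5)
Your proof is correct and uses exactly the same duality argument as the paper: the test-function substitution $g(y)=\bigl(\int_0^\infty k(x,y)f(x)\,\mathrm{d}x\bigr)^{p-1}$ for one direction and H\"older's inequality for the other, with the remaining equivalence obtained by the symmetry $p\leftrightarrow q$, $f\leftrightarrow g$. The only cosmetic difference is that the paper writes out the $(\ref{H.K.Lem4.5.Equ.4.4})\Leftrightarrow(\ref{H.K.Lem4.5.Equ.4.6})$ pair in detail and cites the literature for the other, whereas you do the mirror-image pair; your remark on truncating to justify the division step is a welcome extra precaution that the paper leaves implicit.
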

\begin{proof}
For the proof of the equivalence between \eqref{H.K.Lem4.5.Equ.4.4} and \eqref{H.K.Lem4.5.Equ.4.5} see Lemma 7.69 in \cite{KPS}. The proof of the equivalence of \eqref{H.K.Lem4.5.Equ.4.4} and \eqref{H.K.Lem4.5.Equ.4.6} is similar but for the reader's convenience we give the details:

Let \eqref{H.K.Lem4.5.Equ.4.6} hold. Then, by H$\ddot{\text{o}}$lder's inequality
\begin{align*}
I_1:&=\int_0^\infty\int_0^\infty k(x,y)f(x)g(y)\mathrm{d}x\mathrm{d}y\\
&\leq\left( \int_0^\infty\left(\int_0^\infty k(x,y)g(y)\mathrm{d}y\right)^q\mathrm{d}x\right)^\frac{1}{q}\left(\int_0^\infty f^p(x)\mathrm{d}x\right)^\frac{1}{p}\\
&\leq C\left(\int_0^\infty g^q(y)\mathrm{d}y\right)^\frac{1}{q} \left(\int_0^\infty f^p(x)\mathrm{d}x\right)^\frac{1}{p}
\end{align*}
so inequality \eqref{H.K.Lem4.5.Equ.4.4} holds. Conversely, assume that \eqref{H.K.Lem4.5.Equ.4.4} holds and apply it with
\begin{align*}
f(x)=\left(\int_0^\infty k(x,y)g(y)\mathrm{d}y\right)^{q-1}\in L^p.
\end{align*}
Then, by using \eqref{H.K.Lem4.5.Equ.4.4}
\begin{align*}
I_1&=\int_0^\infty\left(\int_0^\infty k(x,y)g(y)\mathrm{d}y\right)^q\mathrm{d}x\\
&\leq C\left(\int_0^\infty\left(\int_0^\infty k(x,y)g(y)\mathrm{d}y\right)^{(q-1)p}\mathrm{d}x\right)^\frac{1}{p}\left(\int_0^\infty g^q(y)\mathrm{d}y\right)^\frac{1}{q}\\
&=CI_1^\frac{1}{p}\left(\int_0^\infty g^q(y)\mathrm{d}y\right)^\frac{1}{q},
\end{align*}
so \eqref{H.K.Lem4.5.Equ.4.6} holds because $1-\dfrac{1}{p}=\dfrac{1}{q}$. The proof is complete.
\end{proof}

Summing up our investigations above we have the following equivalence Theorem connected to Hardy-Hilbert type inequalities with kernels.
\begin{theorem}\label{H.K.Thm.4.6}
Let $p>1$, the kernel $k(x,y)$ be homogeneous of order $-1$ and the constant $C_p^\ast$ be defined by \eqref{H.K.C.Equ.4.2}. Then the following four statements are equivalent
\begin{enumerate}
\item[(1)] The constant $C_p^\ast <\infty$.
\item[(2)] The Hardy-Hilbert type inequality \eqref{H.K.Lem4.5.Equ.4.4} holds.
\item[(3)] The inequality \eqref{H.K.Lem4.5.Equ.4.5} holds.
\item[(4)] The inequality \eqref{H.K.Lem4.5.Equ.4.6} holds.
\end{enumerate}
Moreover, the constant $C_p^\ast$ is sharp in all of \eqref{H.K.Lem4.5.Equ.4.4} -- \eqref{H.K.Lem4.5.Equ.4.6}.
\end{theorem}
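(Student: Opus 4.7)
The plan is to observe that this theorem is essentially an assembly of the two previously established results: Theorem \ref{H.K.Thm.4.1} gives the equivalence $(1) \Leftrightarrow (3)$ together with the sharp constant $C_p^\ast$ (in the sense that the sharp $C$ in \eqref{H.K.Equ.4.3} equals $(C_p^\ast)^p$, so the sharp multiplicative constant is $C_p^\ast$), and the preceding Lemma yields the equivalences $(2) \Leftrightarrow (3) \Leftrightarrow (4)$ with the same constant $C$ preserved across all three forms. Chaining these gives $(1) \Leftrightarrow (2) \Leftrightarrow (3) \Leftrightarrow (4)$.

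The first step I would carry out is to note that Theorem \ref{H.K.Thm.4.1} applies because the kernel is assumed homogeneous of order $-1$; hence $(1)$ is equivalent to $(3)$, and furthermore the least admissible constant in \eqref{H.K.Lem4.5.Equ.4.5} is exactly $C_p^\ast$. Next I would invoke the Lemma above: for an arbitrary kernel, \eqref{H.K.Lem4.5.Equ.4.4}, \eqref{H.K.Lem4.5.Equ.4.5} and \eqref{H.K.Lem4.5.Equ.4.6} are equivalent for the \emph{same} constant $C$. In particular, for our homogeneous-of-order-$-1$ kernel, the three statements $(2)$, $(3)$, $(4)$ hold or fail simultaneously, which completes the cyclic chain of implications.

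For the sharpness claim I would argue by contradiction transferred through the equivalences. Suppose some $C' < C_p^\ast$ were admissible in, say, \eqref{H.K.Lem4.5.Equ.4.4}. Then by the Lemma (whose proof inspecting shows it carries the constant $C'$ through unchanged into \eqref{H.K.Lem4.5.Equ.4.5} and \eqref{H.K.Lem4.5.Equ.4.6}) we would obtain \eqref{H.K.Lem4.5.Equ.4.5} with constant $C'$, contradicting the sharpness assertion in Theorem \ref{H.K.Thm.4.1}. The same argument applied to \eqref{H.K.Lem4.5.Equ.4.6} yields sharpness there as well. Thus $C_p^\ast$ is simultaneously the best constant in all three inequalities.

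The main obstacle, and really the only subtlety worth checking, is that the constant is genuinely preserved (not just some power of it) under the equivalences of the Lemma; one should verify that the choices $g(y)=(\int k(x,y)f(x)\,dx)^{p-1}$ and $f(x)=(\int k(x,y)g(y)\,dy)^{q-1}$ used in that Lemma's proof, together with the exponent identities $1-1/p=1/q$ and $1-1/q=1/p$, yield exactly $C$ (not $C^p$ or $C^q$) as the multiplicative constant in \eqref{H.K.Lem4.5.Equ.4.4}. Once this bookkeeping is in place, the theorem follows immediately with no further calculation.
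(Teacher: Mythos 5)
Your proposal is exactly the paper's (implicit) argument: the paper gives no separate proof of Theorem \ref{H.K.Thm.4.6} beyond ``summing up our investigations above,'' i.e.\ precisely the assembly of Theorem \ref{H.K.Thm.4.1} with the preceding Lemma, and your constant-chasing through the Lemma and the contradiction argument for sharpness are the intended steps. One bookkeeping point you (like the paper) leave implicit: inequality \eqref{H.K.Equ.4.3} integrates in the \emph{second} variable while \eqref{H.K.Lem4.5.Equ.4.5} integrates in the \emph{first}, so to pass from Theorem \ref{H.K.Thm.4.1} to statement (3) one must apply it to the transposed kernel $k(y,x)$ (equivalently identify \eqref{H.K.Equ.4.3} with \eqref{H.K.Lem4.5.Equ.4.6} at exponent $q$), and then the resulting constant is $\int_0^\infty k(x,1)x^{-1/p}\,\mathrm{d}x=\int_0^\infty k(1,y)y^{-1/q}\,\mathrm{d}y$, which agrees with $C_p^\ast$ as defined in \eqref{H.K.C.Equ.4.2} for the symmetric Hilbert-type kernels under consideration but should be checked when $k(x,y)\neq k(y,x)$.
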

\begin{open}
Find necessary and sufficient conditions on a general kernel $k(x,y)$ so that the Hardy-Hilbert type inequality \eqref{H.K.Lem4.5.Equ.4.4} (and, thus, the equivalent inequality \eqref{H.K.Lem4.5.Equ.4.5} and \eqref{H.K.Lem4.5.Equ.4.6}) holds.
\end{open}

\begin{remark}
For the case with homogeneous kernels we have a satisfactory answer in our Theorem \ref{H.K.Thm.4.6} combined with Remark \ref{H.Rem.4.4}.
\end{remark}

Guided by the investigations in this section we shall finalize this paper by investigating the corresponding theory also in the homogeneous group situation also in this most general kernel situation.
\section{The general kernel case: the homogeneous group situation}\label{Sec.5}
In this case, by a kernel we mean a non-negative function $k(|x|,|y|)$ on $\mathbb{G}\times\mathbb{G}$. We say that the kernel is homogeneous of order $\lambda$ if 
\begin{align*}
k\big(a|x|, a|y|\big)=a^\lambda k\big(|x|,|y|\big),\quad x,y\in \mathbb{G}, a>0.
\end{align*}
First we state the following Lemma we need later on but which is also of independent interest:
\begin{lemma}\label{G.H.K.Lem.5.1}
Let $k(|x|,|y|)$ be a homogeneous kernel of degree $-Q$ on $\mathbb{G}\times\mathbb{G}$. If $p>1, \dfrac{1}{p}+\dfrac{1}{q}=1$, then
\begin{align*}
\int_\mathbb{G} k(1,|y|) |y|^{-\frac{Q}{p}}\,\mathrm{d}y=\int_\mathbb{G} k(|x|,1) |x|^{-\frac{Q}{q}}\,\mathrm{d}x.
\end{align*}
\end{lemma}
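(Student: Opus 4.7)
The plan is to use polar decomposition \eqref{H.I.Eq.3} on $\mathbb{G}$ to reduce both sides to one-dimensional integrals over $(0,\infty)$, and then to relate them via the substitution $r \mapsto 1/r$ together with the $-Q$-homogeneity of the kernel.

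First I would apply \eqref{H.I.Eq.3} to each side. Since the integrand depends on $y$ only through $|y|$, we get
\begin{align*}
\int_\mathbb{G} k(1,|y|)|y|^{-Q/p}\,\mathrm{d}y
&= |\mathfrak{S}|\int_0^\infty k(1,r)\,r^{Q-1-Q/p}\,\mathrm{d}r
= |\mathfrak{S}|\int_0^\infty k(1,r)\,r^{Q/q-1}\,\mathrm{d}r,
\end{align*}
and analogously
\begin{align*}
\int_\mathbb{G} k(|x|,1)|x|^{-Q/q}\,\mathrm{d}x
= |\mathfrak{S}|\int_0^\infty k(s,1)\,s^{Q/p-1}\,\mathrm{d}s,
\end{align*}
using $1-1/p = 1/q$ (and vice versa). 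So it remains to show the one-dimensional identity
\begin{align*}
\int_0^\infty k(1,r)\,r^{Q/q-1}\,\mathrm{d}r = \int_0^\infty k(s,1)\,s^{Q/p-1}\,\mathrm{d}s.
\end{align*}

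For this, I would perform the substitution $r = 1/s$, $\mathrm{d}r = -\mathrm{d}s/s^2$, which converts $(0,\infty)$ to $(0,\infty)$ with a sign flip that cancels the reversal of limits. The key point is then to use the $-Q$-homogeneity of the kernel: applying $k(a\cdot,a\cdot) = a^{-Q}k(\cdot,\cdot)$ with $a=s$ to the pair $(1,1/s)$ gives $k(s,1) = s^{-Q}k(1,1/s)$, hence $k(1,1/s) = s^Q k(s,1)$. Substituting and combining the exponents yields
\begin{align*}
\int_0^\infty k(1,r)\,r^{Q/q-1}\,\mathrm{d}r
= \int_0^\infty s^Q k(s,1)\,s^{1-Q/q}\,s^{-2}\,\mathrm{d}s
= \int_0^\infty k(s,1)\,s^{Q-Q/q-1}\,\mathrm{d}s,
\end{align*}
and since $Q - Q/q = Q/p$, this is exactly the right-hand side.

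The proof is essentially bookkeeping with exponents; there is no real obstacle beyond making sure the homogeneity identity is applied with the correct scaling factor $a=s$ and that the exponent arithmetic uses $1/p+1/q=1$ in both reductions. I would make the calculation entirely formal by justifying the finiteness/integrability implicitly (both integrals take values in $[0,\infty]$ and the identity holds as an equality in $[0,\infty]$).
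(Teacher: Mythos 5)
Your proposal is correct and follows essentially the same route as the paper: polar decomposition to reduce to one-dimensional integrals, the substitution $r\mapsto 1/r$, and the $-Q$-homogeneity identity $k(1,1/s)=s^Q k(s,1)$, with the same exponent bookkeeping via $1/p+1/q=1$ (the paper merely applies homogeneity before the change of variables rather than after, which is an immaterial reordering).
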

\begin{proof}
Let $k(|x|,|y|)$ be a homogeneous kernel of degree $-Q$ on $\mathbb{G}\times\mathbb{G}$. Then, using polar decomposition \eqref{H.I.Eq.3}, we have that
\begin{align}\label{G.H.K.K.Lem5.1.Equ.5.2}
\int_\mathbb{G} k(1,|y|) |y|^{-\frac{Q}{p}}\,\mathrm{d}y&=\int_0^\infty\int_{\mathfrak{S}} k(1,s) s^{Q-1-Q/p}\mathrm{d}\sigma(\xi)\mathrm{d}s \nonumber\\
&=\int_0^\infty\int_{\mathfrak{S}} k(1/s,1) s^{-1-Q/p}\mathrm{d}\sigma(\xi)\mathrm{d}s.
\end{align}
Let $r=1/s$. Then, from \eqref{G.H.K.K.Lem5.1.Equ.5.2}, we have that
\begin{align*}
\int_\mathbb{G} k(1,|y|) |y|^{-\frac{Q}{p}}\,\mathrm{d}y&=\int_0^\infty\int_{\mathfrak{S}} k(r,1) r^{Q-1-Q/q}\,\mathrm{d}\sigma(\xi)\mathrm{d}r \nonumber\\
&=\int_\mathbb{G} k(|x|,1) |x|^{-Q/q}\,\mathrm{d}x.
\end{align*}
This completes the proof.
\end{proof}
The kernels of order $-Q$ is of particular importance for the general theory. For such kernels we define the constant (c.f. Lemma \ref{G.H.K.Lem.5.1})
\begin{align}\label{G.H.K.C.Equ.4.2}
C^\ast_p=\int_\mathbb{G} k(1,|y|) |y|^{-\frac{Q}{p}}\,\mathrm{d}y=\int_\mathbb{G} k(|x|,1) |x|^{-\frac{Q}{q}}\,\mathrm{d}x,
\end{align}
where $p>1, \dfrac{1}{p}+\dfrac{1}{q}=1$.

Our first main result in this section is the following generalization of Theorem \ref{H.K.Thm.4.1}:
\begin{theorem}\label{G.H.K.Thm.4.1}
Let the kernel $k(|x|,|y|)$ satisfy \eqref{G.H.K.C.Equ.4.2} and $p>1$. Then the inequality 
\begin{align}\label{G.H.K.Equ.4.3}
\left(\int_\mathbb{G}\left(\int_\mathbb{G} k(|x|,|y|)f(y)\,\mathrm{d}y\right)^p\mathrm{d}x\right)^{1/p}\leq C\left(\int_\mathbb{G} f^p(x)\,\mathrm{d}x\right)^{1/p}
\end{align}
holds for all positive and measurable functions $f$ on $\mathbb{G}$ if and only if
\begin{align}\label{G.H.H.K.N.Equ.1.4}
C_p^\ast <\infty.
\end{align}
Moreover, the sharp constant in \eqref{G.H.K.Equ.4.3} is $C=C_p^\ast$.
\end{theorem}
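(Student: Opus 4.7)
The plan is to adapt the proof of the classical Theorem \ref{H.K.Thm.4.1} to the homogeneous-group setting. The substitution $u=y/x$ on $\mathbb{R}_+$ is replaced by the dilation $y=D_{|x|}(z)$ on $\mathbb{G}$; two facts will drive all the computations, namely the Jacobian rule $\mathrm{d}y=|x|^Q\,\mathrm{d}z$ together with $|y|=|x||z|$, both consequences of \eqref{H.I.Eq.2} and the quasi-norm axioms, and the homogeneity identity $k(|x|,|x||z|)=|x|^{-Q}k(1,|z|)$.

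For the sufficiency direction I would factor the integrand as
\[
k(|x|,|y|)f(y)=\Bigl[k(|x|,|y|)^{1/q}(|y|/|x|)^{-Q/(pq)}\Bigr]\cdot\Bigl[k(|x|,|y|)^{1/p}(|y|/|x|)^{Q/(pq)}f(y)\Bigr]
\]
and apply H\"older's inequality with exponents $q,p$ in the variable $y$. The dilation $y=D_{|x|}(z)$ combined with the homogeneity of $k$ reduces the first resulting H\"older factor $\bigl(\int_\mathbb{G} k(|x|,|y|)(|y|/|x|)^{-Q/p}\,\mathrm{d}y\bigr)^{1/q}$ to $(C_p^\ast)^{1/q}$. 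Raising the bound to the $p$-th power, integrating in $x$ over $\mathbb{G}$, applying Fubini and then executing the dual substitution $x=D_{|y|}(w)$ in the inner integral collapses it to $C_p^\ast$ as well (here the second equality in \eqref{G.H.K.C.Equ.4.2} enters). Multiplying these factors yields the constant $(C_p^\ast)^{p/q+1}=(C_p^\ast)^p$ on the right-hand side of \eqref{G.H.K.Equ.4.3}, so the bound holds with $C=C_p^\ast$.

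For the necessity and sharpness I would test \eqref{G.H.K.Equ.4.3} against the truncated powers
\[
f_\epsilon(y)=|y|^{-Q/p-\epsilon}\chi_{\{|y|\geq 1\}}(y),\qquad\epsilon>0,
\]
for which polar decomposition \eqref{H.I.Eq.3} gives $\|f_\epsilon\|_{L^p}^p=|\mathfrak{S}|/(p\epsilon)$. For fixed $R\geq 1$ and $|x|\geq R$, the substitution $y=D_{|x|}(z)$ and the homogeneity of $k$ rewrite the inner integral as $|x|^{-Q/p-\epsilon}\int_{\{|z|\geq 1/|x|\}}k(1,|z|)|z|^{-Q/p-\epsilon}\,\mathrm{d}z$, which is at least $|x|^{-Q/p-\epsilon}B_{R,\epsilon}$ with $B_{R,\epsilon}:=\int_{\{|z|\geq 1/R\}}k(1,|z|)|z|^{-Q/p-\epsilon}\,\mathrm{d}z$. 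Raising to the $p$-th power, integrating over $\{|x|\geq R\}$ and dividing by $\|f_\epsilon\|_{L^p}^p$ yields $B_{R,\epsilon}R^{-\epsilon}\leq C$. Fatou's lemma as $\epsilon\to 0^+$ then gives $\int_{\{|z|\geq 1/R\}}k(1,|z|)|z|^{-Q/p}\,\mathrm{d}z\leq C$, and monotone convergence as $R\to\infty$ forces $C_p^\ast\leq C$. Together with the sufficiency this proves the equivalence of \eqref{G.H.K.Equ.4.3} and \eqref{G.H.H.K.N.Equ.1.4} and the sharpness of $C_p^\ast$.

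The main obstacle will be this sharpness step, because the formal extremizer $|y|^{-Q/p}$ is not in $L^p(\mathbb{G})$ by a scale-invariance incompatible with the degree $-Q$ of $k$. One must therefore work with the truncations $f_\epsilon$, whose $L^p$-norm diverges like $\epsilon^{-1/p}$; the cut-off $\{|x|\geq R\}$ is engineered so that after dividing by $\|f_\epsilon\|_{L^p}$ the only surplus is the factor $R^{-\epsilon}$, which tends to $1$ as $\epsilon\to 0^+$, and taking the limits in the correct order (first $\epsilon$, then $R$) recovers the full $C_p^\ast$ via Fatou and monotone convergence.
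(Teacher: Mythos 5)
Your argument is correct, and it follows the same overall skeleton as the paper's proof, but with a somewhat different execution, most notably in the necessity part. For sufficiency, your H\"older splitting with the weights $(|y|/|x|)^{\mp Q/(pq)}$ is the same device as the paper's Jensen step with the normalized measure $\frac{1}{C_p^\ast}\,k(1,t)\,t^{Q-1-Q/p}\,\mathrm{d}t\,\mathrm{d}\sigma$ (Jensen for $u\mapsto u^p$ against a probability measure is exactly this weighted H\"older); both then use Tonelli--Fubini and the symmetry in \eqref{G.H.K.C.Equ.4.2} (Lemma \ref{G.H.K.Lem.5.1}) to collect $(C_p^\ast)^{p/q+1}=(C_p^\ast)^p$, and your intrinsic use of the dilations $y=D_{|x|}(z)$, $x=D_{|y|}(w)$ via \eqref{H.I.Eq.2} is only cosmetically different from the paper's reduction through polar coordinates \eqref{H.I.Eq.3}. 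For necessity you take the same truncated-power test functions $|y|^{-Q/p-\epsilon}\chi_{\{|y|\ge1\}}$ as the paper (its $\beta$ is your $\epsilon$), but where the paper evaluates the left-hand side by duality --- writing the $L^p$-norm as a supremum over $\|\psi\|_q=1$, inserting an explicit near-extremal $\psi$, and tracking the factor $\max\{1,1/t\}^{-\beta p}$ through Fubini --- you bound it from below directly by restricting the outer integration to $\{|x|\ge R\}$ and minorizing the inner integral uniformly there. This is a genuinely more elementary route: the normalization $\|f_\epsilon\|_p^p=|\mathfrak{S}|/(p\epsilon)$ cancels exactly against the outer integral, leaving the clean bound $B_{R,\epsilon}R^{-\epsilon}\le C$, and the iterated limits ($\epsilon\to0^+$ by Fatou, then $R\to\infty$ by monotone convergence) recover $C_p^\ast\le C$, just as the paper's single limit $\beta\to0^+$ with Fatou does. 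What the paper's duality argument buys is that it needs only one limiting parameter and keeps everything in one chain of displayed estimates; what your version buys is the avoidance of the dual-function bookkeeping (and of the $|\mathfrak{S}|$-power bookkeeping), at the modest cost of the auxiliary cut-off $R$. Both establish the equivalence of \eqref{G.H.K.Equ.4.3} and \eqref{G.H.H.K.N.Equ.1.4} and the sharpness $C=C_p^\ast$.
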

\begin{proof}
{\it Sufficiency.} Suppose that \eqref{G.H.H.K.N.Equ.1.4} holds. By Jensen's inequality and then Fubini's theorem, we have that
\begin{align}\label{G.H.H.K.N.Equ.1.6}
\int_\mathbb{G}\left(\int_\mathbb{G} k(|x|,|y|)f(y)\,\mathrm{d}y\right)^p\mathrm{d}x\nonumber\\=\left(C_p^\ast\right)^p\int_0^\infty\int_{\mathfrak{S}}\left(\dfrac{1}{C_p^\ast}\int_0^\infty\int_{\mathfrak{S}} k(1,t)t^{Q-1}t^{-Q/p}\cdot t^{Q/p}f(tr\xi)\mathrm{d}\sigma(\xi)\mathrm{d}t\right)^p r^{Q-1} \mathrm{d}\sigma(\eta)\mathrm{d}r\nonumber\\
\leq \left(C_p^\ast\right)^{p-1} \int_\mathbb{G} \left(\int_\mathbb{G} k(|x|,|y|) |y|^{Q/q} f^p(y)\,\mathrm{d}y\right) |x|^{-Q/q}\,\mathrm{d}x\nonumber\\
=\left(C_p^\ast\right)^{p-1} \int_\mathbb{G} \left(\int_\mathbb{G} k(|x|,|y|) |x|^{-Q/q}\,\mathrm{d}x\right) |y|^{Q/q} f^p(y)\,\mathrm{d}y\nonumber\\
=\left(C_p^\ast\right)^{p-1}\int_\mathbb{G} \left(\int_\mathbb{G} k(|x|,1) |x|^{-Q/q} \,\mathrm{d}x\right) f^p(y)\,\mathrm{d}y\nonumber\\
=\left(C_p^\ast\right)^{p}\int_\mathbb{G} f^p(y)\,\mathrm{d}y,
\end{align}
so that \eqref{G.H.K.Equ.4.3} follows from \eqref{G.H.H.K.N.Equ.1.4} and \eqref{G.H.H.K.N.Equ.1.6}. Moreover, the best constant $C$ in \eqref{G.H.K.Equ.4.3} satisfies
\begin{align*}
C\leq C_p^\ast.
\end{align*}

{\it Necessity.} To prove that \eqref{G.H.K.Equ.4.3} implies \eqref{G.H.H.K.N.Equ.1.4}, we define the test function $f$ by
\begin{align*}
f(x)=|x|^{-\frac{Q}{p}-\beta}\chi_{(1,\infty)}(|x|),
\end{align*}
for $\beta>0$. Then, the right hand side of \eqref{G.H.K.Equ.4.3} becomes
\begin{align}\label{G.H.K.Equ.4.8}
\left(\int_\mathbb{G} f^p(x)\,\mathrm{d}x\right)^{1/p}&=\left(\int_0^\infty \int_{\mathfrak{S}} r^{Q-1} f^p(r\xi)\,\mathrm{d}\sigma(\xi)\mathrm{d}r\right)^{1/p}\nonumber\\
&= \left(\int_1^\infty \int_{\mathfrak{S}} r^{-1-\beta p} \,\mathrm{d}\sigma(\xi)\mathrm{d}r\right)^{1/p}\nonumber\\
&=\left(\dfrac{|\mathfrak{S}|}{\beta p}\right)^{1/p}.
\end{align}
On the other hand, the left hand side of \eqref{G.H.K.Equ.4.3} becomes
\begin{align*}
\left(\int_\mathbb{G}\left(\int_\mathbb{G} k(|x|,|y|)f(y)\,\mathrm{d}y\right)^p\mathrm{d}x\right)^{\frac{1}{p}}=|\mathfrak{S}|^{1+\frac{1}{p}}\left(\int_0^\infty\left(\int_0^\infty r^{\frac{Q-1}{p}}k(r,s)s^{Q-1-\frac{Q}{p}-\beta}\chi_{(1,\infty)}(s)\,\mathrm{d}s\right)^p \,\mathrm{d}r\right)^{\frac{1}{p}}.
\end{align*}
By using the sharpness in H$\ddot{\text{o}}$lder's inequality, we have
\begin{align}\label{G.H.K.Equ.4.9}
\left(\int_\mathbb{G}\left(\int_\mathbb{G} k(|x|,|y|)f(y)\,\mathrm{d}y\right)^p\mathrm{d}x\right)^{\frac{1}{p}}=|\mathfrak{S}|^{1+\frac{1}{p}}\sup\limits_{\|\psi\|_q=1} \int_0^\infty\int_1^\infty r^{\frac{Q-1}{p}}k(r,s)s^{Q-1-\frac{Q}{p}-\beta}\psi(r)\,\mathrm{d}s \,\mathrm{d}r\nonumber\\
\geq |\mathfrak{S}|^{1+\frac{1}{p}} \int_0^\infty\int_1^\infty r^{\frac{Q-1}{p}}k(r,s)s^{Q-1-\frac{Q}{p}-\beta}\psi(r)\,\mathrm{d}s \,\mathrm{d}r:=I,
\end{align}
for $\psi(r)=\left(\beta p\right)^{\frac{1}{q}}r^{-\frac{1}{q}(\beta p+1)}\chi_{(1,\infty)(r)}$, where $\frac{1}{p}+\frac{1}{q}=1$.

Next we use Fubini's theorem and obtain that
\begin{align}\label{G.H.K.Equ.4.10}
I&=|\mathfrak{S}|^{1+\frac{1}{p}}\left(\beta p\right)^{\frac{1}{q}} \int_1^\infty r^{\frac{Q}{p}-\beta(p-1)-1-Q}\left(\int_1^\infty k\left(1,\frac{s}{r}\right) s^{Q-1-\frac{Q}{p}-\beta}\,\mathrm{d}s\right)\,\mathrm{d}r\nonumber\\
&=|\mathfrak{S}|^{1+\frac{1}{p}}\left(\beta p\right)^{\frac{1}{q}} \int_1^\infty r^{-\beta p-1}\left(\int_{\frac{1}{r}}^\infty k(1,t) t^{Q-1-\frac{Q}{p}-\beta}\,\mathrm{d}t\right)\,\mathrm{d}r\nonumber\\
&\geq |\mathfrak{S}|^{1+\frac{1}{p}}\left(\beta p\right)^{\frac{1}{q}} \int_0^\infty k(1,t) t^{Q-1-\frac{Q}{p}-\beta}\left(\int_{\max{\{1, \frac{1}{t}\}}}^\infty r^{-\beta p-1} \,\mathrm{d}r \right)\,\mathrm{d}t\nonumber\\
&=|\mathfrak{S}|^{1+\frac{1}{p}}\dfrac{\left(\beta p\right)^{\frac{1}{q}}}{\beta p} \int_0^\infty k(1,t) t^{Q-1-\frac{Q}{p}-\beta}\left( \max\Big\{1, \frac{1}{t}\Big\}\right)^{-\beta p}\,\mathrm{d}t.
\end{align}
Hence, by using \eqref{G.H.K.Equ.4.8} -- \eqref{G.H.K.Equ.4.10} together with \eqref{G.H.K.Equ.4.3}, we can conclude that
\begin{align}\label{G.H.K.Equ.4.11}
|\mathfrak{S}| \int_0^\infty k(1,t) t^{Q-1-\frac{Q}{p}-\beta}\left( \max\Big\{1, \frac{1}{t}\Big\}\right)^{-\beta p}\,\mathrm{d}t\leq C<\infty.
\end{align}
Thus, by letting $\beta\to 0^+$ in \eqref{G.H.K.Equ.4.11} and using the Fatou's lemma, we see that \eqref{G.H.H.K.N.Equ.1.4} holds and even
\begin{align*}
C_p^\ast\leq C<\infty.
\end{align*}
The proof is complete including the fact that $C=C_p^\ast$ is the sharp constant in \eqref{G.H.K.Equ.4.3}.
\end{proof}
Next, we point out the following Euclidean version ($\mathbb{G}=\mathbb{R}$) of the above theorem (c.f. Theorem \ref{H.K.Thm.4.1}):
\begin{corollary}\label{G.H.K.Cor.1.2}
Let the kernel $k(|x|,|y|)$ be homogeneous of order $-1$ satisfy 
\begin{align}\label{G.H.K.Equ.4.12}
C_p^\ast=2\int_0^\infty k(1,s)s^{-\frac{1}{p}}\,\mathrm{d}s=2\int_0^\infty k(r,1)r^{-\frac{1}{q}}\,\mathrm{d}r,
\end{align}
where $p>1, \frac{1}{p}+\frac{1}{q}=1$. Then the inequality
\begin{align}\label{G.H.K.Equ.4.13}
\left(\int_\mathbb{R}\left(\int_\mathbb{R} k(|x|,|y|)f(y)\,\mathrm{d}y\right)^p\,\mathrm{d}x\right)^\frac{1}{p}\leq C\left(\int_\mathbb{R} f^p(x)\,\mathrm{d}x\right)^\frac{1}{p}
\end{align}
holds for all positive and measurable functions $f$ on $\mathbb{R}$ if and only if $C_p^\ast<\infty$. Moreover, the sharp constant in \eqref{G.H.K.Equ.4.13} is $C=C_p^\ast$, where $C_p^\ast$ is defined in \eqref{G.H.K.Equ.4.12}.
\end{corollary}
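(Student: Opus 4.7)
The plan is to deduce Corollary \ref{G.H.K.Cor.1.2} as the special case $\mathbb{G}=\mathbb{R}$, $Q=1$ of Theorem \ref{G.H.K.Thm.4.1}. The only substantive point is translating the homogeneous-group integrals appearing in \eqref{G.H.K.C.Equ.4.2} into the half-line integrals used in \eqref{G.H.K.Equ.4.12}.

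First I would observe that $\mathbb{R}$, endowed with the standard dilations $D_\lambda(x)=\lambda x$ and the ordinary absolute value as quasi-norm, is a homogeneous group of homogeneous dimension $Q=1$. Its unit sphere is $\mathfrak{S}=\{-1,+1\}$ with surface measure $|\mathfrak{S}|=2$, so the polar decomposition \eqref{H.I.Eq.3} reads simply
\[
\int_\mathbb{R} F(x)\,\mathrm{d}x = \int_0^\infty \bigl[F(r)+F(-r)\bigr]\,\mathrm{d}r
\]
for any $F\in L^1(\mathbb{R})$. Since both $k(1,|y|)|y|^{-1/p}$ and $k(|x|,1)|x|^{-1/q}$ are radial, this identity produces exactly the factor of $2$ by which \eqref{G.H.K.Equ.4.12} differs from \eqref{G.H.K.C.Equ.4.2} specialized to $Q=1$; hence the two definitions of $C_p^\ast$ coincide in this setting.

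With this identification in hand, and noting that homogeneity of order $-1$ for $k$ coincides with homogeneity of order $-Q$ when $Q=1$, the hypotheses of Theorem \ref{G.H.K.Thm.4.1} match exactly those imposed in the corollary. Applying that theorem directly yields both the equivalence between $C_p^\ast<\infty$ and the validity of \eqref{G.H.K.Equ.4.13}, and the sharpness of the constant $C=C_p^\ast$.

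The \emph{main obstacle}, if one can call it that, is purely bookkeeping: one must keep careful track of the factor $|\mathfrak{S}|=2$ arising from the two-point unit sphere of $\mathbb{R}$, which is precisely what distinguishes \eqref{G.H.K.Equ.4.12} from the classical one-sided formula \eqref{H.K.C.Equ.4.2} appearing in Theorem \ref{H.K.Thm.4.1}. Once this factor is correctly absorbed into the definition of $C_p^\ast$, no genuine calculation is required beyond the one-line polar-decomposition identity displayed above.
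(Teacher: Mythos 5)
Your proposal is correct and is exactly the route the paper intends: the corollary is simply Theorem \ref{G.H.K.Thm.4.1} specialized to $\mathbb{G}=\mathbb{R}$ with $Q=1$, where the two-point unit sphere $\mathfrak{S}=\{-1,+1\}$ (so $|\mathfrak{S}|=2$) turns the constant \eqref{G.H.K.C.Equ.4.2} into the factor-of-two formula \eqref{G.H.K.Equ.4.12}. Your bookkeeping of that factor via the polar decomposition is precisely the only verification needed, so nothing is missing.
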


Next we state the following useful equivalence result of independent interest which holds for all kernels:

\begin{lemma}\label{G.H.K.Lemm.4.4}
Let $p>1, \dfrac{1}{p}+\dfrac{1}{q}=1$, $C$ be a constant, $0<C<\infty$, and $k(x,y)$ be a general kernel. Then the following inequalities are equivalent:
\begin{align}
\int_\mathbb{G}\int_\mathbb{G} k(x,y)f(x)g(y)\mathrm{d}x\mathrm{d}y&\leq C\left(\int_\mathbb{G} f^p(x)\mathrm{d}x\right)^\frac{1}{p}\left(\int_\mathbb{G} g^q(y)\mathrm{d}y\right)^\frac{1}{q}\label{G.H.K.Lem4.5.Equ.4.4}\\
\int_\mathbb{G}\left(\int_\mathbb{G} k(x,y)f(x)\mathrm{d}x\right)^p\mathrm{d}y&\leq C^p\int_\mathbb{G} f^p(x)\mathrm{d}x\label{G.H.K.Lem4.5.Equ.4.5}\\
\int_\mathbb{G}\left(\int_\mathbb{G} k(x,y)g(y)\mathrm{d}y\right)^q\mathrm{d}x&\leq C^q\int_\mathbb{G} g^q(y)\mathrm{d}y.\label{G.H.K.Lem4.5.Equ.4.6}
\end{align}
\end{lemma}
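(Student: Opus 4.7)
The plan is to replay, in the homogeneous group setting, exactly the circle of implications used in the proof of the classical analogue stated just before Theorem~\ref{H.K.Thm.4.6}. The only tools required are Hölder's inequality and a self-referential choice of test function, and both translate verbatim to $(\mathbb{G},\mathrm{d}x)$ because the Haar measure is $\sigma$-finite; no structural property of $\mathbb{G}$ beyond this is used.

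First I would establish \eqref{G.H.K.Lem4.5.Equ.4.5} $\Rightarrow$ \eqref{G.H.K.Lem4.5.Equ.4.4}. Writing
$$\int_\mathbb{G}\int_\mathbb{G} k(x,y)f(x)g(y)\,\mathrm{d}x\mathrm{d}y = \int_\mathbb{G} g(y)\left(\int_\mathbb{G} k(x,y)f(x)\,\mathrm{d}x\right)\mathrm{d}y,$$
I would apply Hölder's inequality in the outer $y$-integral with exponents $q$ and $p$ and then invoke \eqref{G.H.K.Lem4.5.Equ.4.5} to control the $L^p(\mathbb{G},\mathrm{d}y)$-norm of the inner integral, producing exactly the constant $C$ on the right-hand side of \eqref{G.H.K.Lem4.5.Equ.4.4}. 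The implication \eqref{G.H.K.Lem4.5.Equ.4.6} $\Rightarrow$ \eqref{G.H.K.Lem4.5.Equ.4.4} is the mirror image, integrating in $x$ first and applying Hölder with exponents $p$ and $q$.

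For the converse \eqref{G.H.K.Lem4.5.Equ.4.4} $\Rightarrow$ \eqref{G.H.K.Lem4.5.Equ.4.5}, I would substitute the test function
$$g(y) = \left(\int_\mathbb{G} k(x,y)f(x)\,\mathrm{d}x\right)^{p-1}$$
into \eqref{G.H.K.Lem4.5.Equ.4.4}. Using $(p-1)q=p$, the left-hand side of \eqref{G.H.K.Lem4.5.Equ.4.4} equals $I$ and $\bigl(\int_\mathbb{G} g^q(y)\,\mathrm{d}y\bigr)^{1/q}$ equals $I^{1/q}$, where
$$I := \int_\mathbb{G}\left(\int_\mathbb{G} k(x,y)f(x)\,\mathrm{d}x\right)^p\mathrm{d}y.$$
Thus \eqref{G.H.K.Lem4.5.Equ.4.4} degenerates into $I \le C\bigl(\int_\mathbb{G} f^p(x)\,\mathrm{d}x\bigr)^{1/p}I^{1/q}$, and dividing by $I^{1/q}$ together with $1-\tfrac{1}{q}=\tfrac{1}{p}$ gives precisely \eqref{G.H.K.Lem4.5.Equ.4.5}. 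The implication \eqref{G.H.K.Lem4.5.Equ.4.4} $\Rightarrow$ \eqref{G.H.K.Lem4.5.Equ.4.6} is completely symmetric, using the dual test function $f(x)=\bigl(\int_\mathbb{G} k(x,y)g(y)\,\mathrm{d}y\bigr)^{q-1}$.

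The only delicate point is the legality of dividing by $I^{1/q}$ when $I$ could be infinite; I would handle this by first replacing $f$ with $f\chi_{B(0,R)}$ and $k$ with $\min(k,N)$, running the argument on the resulting finite quantities, and then letting $R,N\to\infty$ by monotone convergence. Apart from this standard bookkeeping I do not expect any real obstacle: the passage from the half-line of Lemma~7.69 of \cite{KPS} to the homogeneous group is purely notational, since no integration-by-parts, no polar decomposition and no homogeneity of the kernel are used in this lemma.
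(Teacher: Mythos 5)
Your proposal is correct and follows essentially the same route as the paper's own proof: the implications \eqref{G.H.K.Lem4.5.Equ.4.4}$\Rightarrow$\eqref{G.H.K.Lem4.5.Equ.4.5} and \eqref{G.H.K.Lem4.5.Equ.4.4}$\Rightarrow$\eqref{G.H.K.Lem4.5.Equ.4.6} via the self-referential test functions $g(y)=\bigl(\int_\mathbb{G} k(x,y)f(x)\,\mathrm{d}x\bigr)^{p-1}$ and its dual, and the converses via H\"older's inequality. Your extra truncation step to justify dividing by $I^{1/q}$ when $I$ might be infinite is a sensible refinement the paper leaves implicit, but it does not change the argument.
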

\begin{proof}
Suppose that \eqref{G.H.K.Lem4.5.Equ.4.4} holds. Let $g(y)=\left(\int_\mathbb{G} k(x,y)f(x)\mathrm{d}x\right)^{p-1}$. Then, by H$\ddot{\text{o}}$lder's inequality we have that
\begin{align*}
\int_\mathbb{G}\left(\int_\mathbb{G} k(x,y)f(x)\mathrm{d}x\right)^p\mathrm{d}y&=\int_\mathbb{G}\int_\mathbb{G} k(x,y)f(x)g(y)\mathrm{d}x\mathrm{d}y\\
&\leq C\left(\int_\mathbb{G} f^p(x)\mathrm{d}x\right)^\frac{1}{p}\left(\int_\mathbb{G} g^q(y)\mathrm{d}y\right)^\frac{1}{q}\\
&= C\left(\int_\mathbb{G} f^p(x)\mathrm{d}x\right)^\frac{1}{p}\left(\int_\mathbb{G} \left(\int_\mathbb{G} k(x,y)f(x)\mathrm{d}x\right)^p\,\mathrm{d}y\right)^\frac{1}{q},
\end{align*}
so \eqref{G.H.K.Lem4.5.Equ.4.5} holds because $1-\frac{1}{q}=\frac{1}{p}$. Similarly, one can show that \eqref{G.H.K.Lem4.5.Equ.4.4} implies \eqref{G.H.K.Lem4.5.Equ.4.6}. Conversely, assume that \eqref{G.H.K.Lem4.5.Equ.4.6} holds. Then, by H$\ddot{\text{o}}$lder's inequality and \eqref{G.H.K.Lem4.5.Equ.4.6}, we have that
\begin{align*}
\int_\mathbb{G}\int_\mathbb{G} k(x,y)f(x)g(y)\mathrm{d}x\mathrm{d}y &\leq\left( \int_\mathbb{G}\left(\int_\mathbb{G} k(x,y)g(y)\mathrm{d}y\right)^q\mathrm{d}x\right)^\frac{1}{q}\left(\int_\mathbb{G} f^p(x)\mathrm{d}x\right)^\frac{1}{p}\\
&\leq C\left(\int_\mathbb{G} g^q(y)\mathrm{d}y\right)^\frac{1}{q} \left(\int_\mathbb{G} f^p(x)\mathrm{d}x\right)^\frac{1}{p},
\end{align*}
so the inequality \eqref{G.H.K.Lem4.5.Equ.4.4} holds. Similarly we can prove that \eqref{G.H.K.Lem4.5.Equ.4.5} implies \eqref{G.H.K.Lem4.5.Equ.4.4} so the proof is complete.
\end{proof}
Finally, by just combining Theorem \ref{G.H.K.Thm.4.1} with Lemma \ref{G.H.K.Lemm.4.4} we obtain the following generalization of the equivalence Theorem \ref{G.H.K.Thm.4.6}, which connects Hardy-type and Hardy-Hilbert type inequalities with general kernels:
\begin{theorem}\label{G.H.K.Thm.4.6}
Let $p>1, \dfrac{1}{p}+\dfrac{1}{q}=1$, the kernel $k(|x|,|y|)$ be homogeneous of order $-Q$ and the constant $C_p^\ast$ be defined by \eqref{G.H.K.C.Equ.4.2}. Then the following four statements are equivalent:
\begin{enumerate}
\item[(i)] The constant $C_p^\ast <\infty$.
\item[(ii)] The Hardy-Hilbert type inequality
\begin{align}\label{G.H.K.C.Equ.4.17}
\int_\mathbb{G}\int_\mathbb{G} k(|x|,|y|)f(x)g(y)\mathrm{d}x\mathrm{d}y\leq C\left(\int_\mathbb{G} f^p(x)\mathrm{d}x\right)^\frac{1}{p}\left(\int_\mathbb{G} g^q(y)\mathrm{d}y\right)^\frac{1}{q}
\end{align}
holds for some finite constant $C>0$.
\item[(iii)] The Hardy-type inequality 
\begin{align}\label{G.H.K.C.Equ.4.18}
\int_\mathbb{G}\left(\int_\mathbb{G} k(|x|,|y|)f(x)\mathrm{d}x\right)^p\mathrm{d}y\leq C^p\int_\mathbb{G} f^p(x)\mathrm{d}x
\end{align}
 holds for the same finite constant $C>0$.
\item[(iv)] The inequality 
\begin{align}\label{G.H.K.C.Equ.4.19}
\int_\mathbb{G}\left(\int_\mathbb{G} k(|x|,|y|)g(y)\mathrm{d}y\right)^q\mathrm{d}x&\leq C^q\int_\mathbb{G} g^q(y)\mathrm{d}y
\end{align}
holds for the same finite constant $C>0$.
\end{enumerate}
Moreover, the constant $C_p^\ast$ is sharp in all of \eqref{G.H.K.C.Equ.4.17} -- \eqref{G.H.K.C.Equ.4.19}.
\end{theorem}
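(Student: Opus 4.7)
The plan is to deduce Theorem~\ref{G.H.K.Thm.4.6} by combining the two preceding results without any fresh analytic input: Theorem~\ref{G.H.K.Thm.4.1} provides the bridge between (i) and (iii), and Lemma~\ref{G.H.K.Lemm.4.4} provides the bridge between (ii), (iii) and (iv). Since Lemma~\ref{G.H.K.Lemm.4.4} is stated for an arbitrary kernel on $\mathbb{G}\times\mathbb{G}$, no homogeneity assumption is needed for that part; the homogeneity of order $-Q$ is only used to invoke Theorem~\ref{G.H.K.Thm.4.1}.

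First I would observe that Theorem~\ref{G.H.K.Thm.4.1}, applied to the homogeneous kernel $k(|x|,|y|)$, yields the equivalence (i)$\Leftrightarrow$(iii). Indeed, (iii) with constant $C$ finite is exactly inequality \eqref{G.H.K.Equ.4.3} (after taking $p$th powers), and Theorem~\ref{G.H.K.Thm.4.1} asserts that such an inequality holds if and only if $C_p^\ast<\infty$, with best constant equal to $C_p^\ast$. This step already pins down the sharp constant in \eqref{G.H.K.C.Equ.4.18}.

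Next I would apply Lemma~\ref{G.H.K.Lemm.4.4} to the same kernel $k(|x|,|y|)$ with the same constant $C$. This immediately gives the chain (ii)$\Leftrightarrow$(iii)$\Leftrightarrow$(iv), where \emph{the same} constant $C$ appears in all three inequalities. Combining this chain with the equivalence (i)$\Leftrightarrow$(iii) from the previous step produces the four-way equivalence claimed in the theorem.

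Finally, for the sharpness assertion, I would argue as follows. Denote by $C_{\mathrm{H}}$, $C_{\mathrm{HH}}$, $C_{\mathrm{HH}}'$ the infimal admissible constants in \eqref{G.H.K.C.Equ.4.18}, \eqref{G.H.K.C.Equ.4.17}, \eqref{G.H.K.C.Equ.4.19}, respectively. Theorem~\ref{G.H.K.Thm.4.1} gives $C_{\mathrm{H}} = C_p^\ast$. Because Lemma~\ref{G.H.K.Lemm.4.4} shows that each inequality with constant $C$ implies the other two with the same constant $C$, the three infimal constants coincide, hence $C_{\mathrm{HH}} = C_{\mathrm{HH}}' = C_p^\ast$ as well. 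There is no genuine obstacle here; the only point requiring care is to keep track that Lemma~\ref{G.H.K.Lemm.4.4} is really constant-preserving in both directions (which it is, by inspection of its proof via the standard H\"older duality arguments), so that sharpness transfers automatically from \eqref{G.H.K.C.Equ.4.18} to \eqref{G.H.K.C.Equ.4.17} and \eqref{G.H.K.C.Equ.4.19}.
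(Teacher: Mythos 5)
Your overall strategy --- deduce the theorem by combining Theorem \ref{G.H.K.Thm.4.1} with Lemma \ref{G.H.K.Lemm.4.4} --- is exactly the route the paper itself takes (its proof is precisely this combination, with details omitted), and the parts of your argument relying on Lemma \ref{G.H.K.Lemm.4.4} (the constant-preserving equivalence of (ii), (iii), (iv) for a general kernel, and the transfer of sharpness between them) are sound. The concrete gap is the step where you assert that (iii), i.e.\ \eqref{G.H.K.C.Equ.4.18}, ``is exactly inequality \eqref{G.H.K.Equ.4.3}''. It is not: in \eqref{G.H.K.Equ.4.3} the inner integration variable sits in the \emph{second} slot of the kernel, $\int_\mathbb{G} k(|x|,|y|)f(y)\,\mathrm{d}y$, whereas in \eqref{G.H.K.C.Equ.4.18} it sits in the \emph{first} slot, $\int_\mathbb{G} k(|x|,|y|)f(x)\,\mathrm{d}x$. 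These are the operator and its transpose, and for a non-symmetric kernel they are different inequalities with different sharp constants. To make the matching honest you must either apply Theorem \ref{G.H.K.Thm.4.1} to the transposed kernel $\tilde k(|y|,|x|):=k(|x|,|y|)$ (still homogeneous of order $-Q$), which yields for (iii) the sharp constant $\int_\mathbb{G} k(|x|,1)|x|^{-Q/p}\,\mathrm{d}x$, or apply Theorem \ref{G.H.K.Thm.4.1} with $q$ in place of $p$, which matches (iv) and yields $\int_\mathbb{G} k(1,|y|)|y|^{-Q/q}\,\mathrm{d}y$; by Lemma \ref{G.H.K.Lem.5.1} (with $p$ and $q$ interchanged) these two quantities coincide with each other, but they coincide with $C_p^\ast$ as defined in \eqref{G.H.K.C.Equ.4.2} only under an extra hypothesis, e.g.\ when $k(a,b)=k(b,a)$ (as for the Hilbert-type kernels motivating the paper) or when $p=2$.

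Consequently, what your argument actually establishes is that (ii)--(iv) are equivalent, with common sharp constant $\int_\mathbb{G} k(1,|y|)|y|^{-Q/q}\,\mathrm{d}y=\int_\mathbb{G} k(|x|,1)|x|^{-Q/p}\,\mathrm{d}x$; the identification of this quantity with $C_p^\ast$, and hence the equivalence with (i) exactly as you state it, does not follow from the quoted results alone. That this is not a mere bookkeeping quibble can be seen already in the one-dimensional setting of Section \ref{Sec.4} with $k(x,y)=x^a y^b(x+y)^{-1-a-b}$, $a\neq b$: there $\int_0^\infty k(1,y)y^{-1/q}\,\mathrm{d}y$ can be finite while $\int_0^\infty k(1,y)y^{-1/p}\,\mathrm{d}y$ diverges, so the two candidate constants are genuinely different and the $p$--$q$ bookkeeping matters. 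Admittedly the same transposition is latent in the statement you were asked to prove (and is harmless for symmetric kernels), so your proposal does reproduce the intended argument; but since you explicitly use the literal identity of (iii) with \eqref{G.H.K.Equ.4.3} to pin the sharp constant, this is the point you must repair --- either by invoking the transposed kernel as above and adding the symmetry observation, or by matching Theorem \ref{G.H.K.Thm.4.1} (with exponent $q$) to statement (iv) and then tracking carefully which of the two integrals in \eqref{G.H.K.C.Equ.4.2} actually appears.
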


\begin{proof}
By using a similar arguments as in the proof of Theorem \ref{G.H.K.Thm.4.1} and Lemma \ref{G.H.K.Lemm.4.4} we finish the proof, so we omit the details. The proof is complete. 
\end{proof}

In particular, we present the following Euclidean version which also is new in this generality:
\begin{corollary}\label{G.H.H.Corr.5.6}
Let $p>1, \dfrac{1}{p}+\dfrac{1}{q}=1$, the kernel $k(|x|,|y|)$ be homogeneous of order $-n$ and the constant $C_p^\ast$ be defined by \eqref{G.H.K.C.Equ.4.2}. Then the following four statements are equivalent:
\begin{enumerate}
\item[(i)] The constant $C_p^\ast <\infty$.
\item[(ii)] The Hardy-Hilbert type inequality
\begin{align}\label{G.H.K.C.Equ.5.18}
\int_{\mathbb{R}^n}\int_{\mathbb{R}^n} k(|x|,|y|)f(x)g(y)\mathrm{d}x\mathrm{d}y\leq C\left(\int_{\mathbb{R}^n} f^p(x)\mathrm{d}x\right)^\frac{1}{p}\left(\int_{\mathbb{R}^n} g^q(y)\mathrm{d}y\right)^\frac{1}{q}
\end{align}
holds for some finite constant $C>0$.
\item[(iii)] The Hardy-type inequality 
\begin{align}\label{G.H.K.C.Equ.5.19}
\int_{\mathbb{R}^n}\left(\int_{\mathbb{R}^n} k(|x|,|y|)f(x)\mathrm{d}x\right)^p\mathrm{d}y\leq C^p\int_{\mathbb{R}^n} f^p(x)\mathrm{d}x
\end{align}
 holds for the same finite constant $C>0$.
\item[(iv)] The inequality 
\begin{align}\label{G.H.K.C.Equ.5.20}
\int_{\mathbb{R}^n}\left(\int_{\mathbb{R}^n} k(|x|,|y|)g(y)\mathrm{d}y\right)^q\mathrm{d}x&\leq C^q\int_{\mathbb{R}^n} g^q(y)\mathrm{d}y
\end{align}
holds for the same finite constant $C>0$.
\end{enumerate}
Moreover, the constant $C=C_p^\ast$ is sharp in all of \eqref{G.H.K.C.Equ.5.18} -- \eqref{G.H.K.C.Equ.5.20}.
\end{corollary}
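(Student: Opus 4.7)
The plan is to obtain Corollary \ref{G.H.H.Corr.5.6} as a direct specialization of Theorem \ref{G.H.K.Thm.4.6} to the case $\mathbb{G} = \mathbb{R}^n$. First I would verify that $\mathbb{R}^n$, equipped with the standard isotropic dilations $D_\lambda(x) = (\lambda x_1, \ldots, \lambda x_n)$, fits the framework of Section \ref{Sec.2}: the dilation weights satisfy $v_1 = \cdots = v_n = 1$, so the homogeneous dimension is $Q = v_1 + \cdots + v_n = n$, the Haar measure coincides with Lebesgue measure, and the Euclidean norm $|x| = (x_1^2 + \cdots + x_n^2)^{1/2}$ meets the three axioms of a quasi-norm (note that $x^{-1} = -x$ here, so $|x^{-1}| = |x|$ is immediate).

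With these identifications the hypothesis that $k(|x|,|y|)$ is homogeneous of order $-n$ on $\mathbb{R}^n \times \mathbb{R}^n$ becomes exactly the statement that $k$ is homogeneous of order $-Q$ on $\mathbb{G} \times \mathbb{G}$ in the sense of Section \ref{Sec.5}. The constant $C_p^\ast$ in Corollary \ref{G.H.H.Corr.5.6} is then literally the constant defined in \eqref{G.H.K.C.Equ.4.2} for this choice of $\mathbb{G}$, and the three inequalities \eqref{G.H.K.C.Equ.5.18}--\eqref{G.H.K.C.Equ.5.20} are exactly the inequalities \eqref{G.H.K.C.Equ.4.17}--\eqref{G.H.K.C.Equ.4.19} when one substitutes $\mathbb{G} = \mathbb{R}^n$ and $Q = n$.

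The proof is therefore completed by simply invoking Theorem \ref{G.H.K.Thm.4.6}: the equivalence of (i)--(iv) and the sharpness of the constant $C = C_p^\ast$ transfer verbatim. Since the entire content is contained in the Euclidean specialization of an already-proved result, there is no substantive obstacle; the only care needed is the book-keeping in the first paragraph (identifying dilation weights, homogeneous dimension, Haar measure, and checking the quasi-norm axioms), after which the corollary follows as a one-line consequence of Theorem \ref{G.H.K.Thm.4.6}.
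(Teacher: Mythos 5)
Your proposal is correct and is exactly the paper's intended argument: the corollary is stated as the Euclidean specialization of Theorem \ref{G.H.K.Thm.4.6}, obtained by taking $\mathbb{G}=\mathbb{R}^n$ with the standard isotropic dilations, so that $Q=n$, the Haar measure is Lebesgue measure, and the Euclidean norm is an admissible quasi-norm. Your book-keeping of these identifications (including reading $C_p^\ast$ from \eqref{G.H.K.C.Equ.4.2} with $Q=n$) is precisely what the paper leaves implicit, so nothing further is needed.
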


\section*{Acknowledgments}
The first author thanks Ghent Analysis \& PDE Centre at Ghent University, Belgium. He is grateful to the centre for the support and warm hospitality during his long-term research visit. The research activities of the third author are funded by the FWO Odysseus 1 grant no. G.0H94.18N: Analysis and Partial Differential Equations, by the Methusalem programme of the Ghent University Special Research Fund (BOF) (grant no. 01M01021) and by the EPSRC (grants no. EP/R003025/2 and EP/V005529/1).



\end{document}